\theoremstyle{change}
\newtheorem{thm}[subsection]{Theorem}
\newtheorem{lemma}[subsection]{Lemma}
\newtheorem{prop}[subsection]{Proposition}
\newtheorem{cor}[subsection]{Corollary}
\newtheorem{definition}[subsection]{Definition}
\def\frametitle{}
\def\Ann{{\mathrm{Ann}\,}}
\def\Ker{{\rm Ker}\,}
\def\Lie{{\rm Lie}}
\def\Hom{{\rm Hom}}
\def\End{{\rm End}}
\def\Ind{{\rm Ind}}
\def\v0{{\vec{0}}}
\def\sl2{{\mathfrak{sl}(2)}}
\def\sp{{\mathfrak{sp}}}
\def\fsp{{\mathfrak{sp}}}
\def\cF{\mathcal{F}}
\def\cI{\mathcal{I}}
\def\cN{\mathcal{N}}
\def\cR{\mathcal{R}}
\def\cT{\mathcal{T}}
\def\cU{\mathcal{U}}
\def\cW{\mathcal{W}}
\def\cZ{\mathcal{Z}}
\def\sC{\mathscr{C}}
\def\sR{\mathscr{R}}
\def\sT{\mathscr{T}}
\def\sU{\mathscr{U}}
\def\sV{\mathscr{V}}
\def\sY{\mathscr{Y}}
\def\bC{{\mathbb{C}}}
\def\bN{{\mathbb{N}}}
\def\bR{{\mathbb{R}}}
\def\bZ{{\mathbb{Z}}}
\def\rG{{\rm{G}}}
\def\rH{{\rm{H}}}
\def\rK{{\rm{K}}}
\def\rM{{\rm{M}}}
\def\rO{{\rm{O}}}
\def\rR{{\rm{R}}}
\def\rU{{\rm{U}}}
\def\fee{\mathfrak{e}}
\def\fgg{\mathfrak{g}}
\def\fhh{\mathfrak{h}}
\def\fkk{\mathfrak{k}}
\def\fmm{\mathfrak{m}}
\def\wtG{\widetilde{G}}
\def\wtH{\widetilde{H}}
\def\wtK{\widetilde{K}}
\def\wtM{\widetilde{M}}
\def\wtSp{\widetilde{\rSp}}
\def\inn#1#2{\left\langle{#1},{#2}\right\rangle}
\def\Sp{{\rm Sp}}
\def\rSp{{\rm Sp}}
\def\O{{\rm O}}
\def\rO{{\rm O}}
\def\det{{\rm det}}
\def\bfone{{\bf 1}}
\def\fso{\mathfrak{so}}
\def\cuu{\mathcal{U}}
\def\czz{\mathcal{Z}}
\def\cug{\cuu(\fgg)}
\def\fmm{\mathfrak{m}}
\def\csigma\
\def\Ind{{\rm Ind}}
\def\seesawpair#1#2#3#4{
\xymatrix{
{#1} \ar@{-}[dr]& {#2} \\
{#3} \ar@{-}[ur] & {#4}}
}
\newtheorem{thmA}{Theorem}
\newtheorem*{thm*}{Theorem}
\ifdefined\frametitle{}
\ifdefined\thm{}\else
\newtheorem{thm}{Theorem}
\ifdefined\lemma{}\else
\newtheorem{lemma}[thm]{Lemma}
\ifdefined\prop{}\else
\ifdefined\cor{}\else
\newtheorem{cor}[thm]{Corollary}
\ifdefined\example{}\else
\ifdefined\conjecture{}\else{
\ifdefined\remark{}\else{
\theoremstyle{remark}
\ifdefined\definition{}\else{
\theoremstyle{definition}
\newtheorem{definition}[thm]{Definition}}
\long\def\quest #1 {\ifdefined\showquest{\color{blue}\small
#1\normalfont}\fi}
\long\def\ignore#1{\empty}
\def\saveenum{\xdef\@savedenum{\the\c@enumi\relax}}
\def\resetenum{\global\c@enumi\@savedenum}
\def\wtKGp{{\wtK'}}
\def\wtKH{{\wtM}}
\def\XiGHp{{\Xi_{\fgg,\fhh'}}}
\def\XiHpH{{\Xi_{\fhh',\fhh}}}
\def\Wc{{W_\bC}}
\def\gU{{\sU}}
\def\gV{{\sV}}
\def\gT{{\sT}}
\begin{document}

\subjclass[2010]{22E46, 22E47}

\keywords{local theta lifts, Zuckerman functor, derived functor
  module, singular unitary representation}

\title[Derived functor, Dual pairs and $\cU(\fgg)^K$-actions]{Derived functor
  modules,\\ Dual pairs and $\cU(\fgg)^K$\mbox{-}actions} 

\author{Ma Jia Jun}


\address{Department of Mathematics, 
Ben-Gurion University of the Negev,
P.O.B. 653, Be'er Sheva 84105,
Israel}

\email{hoxide@gmail.com}

\maketitle
\begin{abstract}
  Derived functors (or Zuckerman functors) play a very important role
  in the study of unitary representations of real reductive
  groups. These functors are usually applied on highest weight modules
  in the so-called good range and the theory is well-understood. On
  the other hand, there were several studies on the irreducibility and
  unitarizability, in which derived functors are applied to singular
  modules. See Enright et al. (Acta. Math. 1985), for example. In this
  article, we apply derived functors to certain modules arising from
  the formalism of local theta lifting, and investigate the
  irreducible sub-quotients of resulting modules. The key technique is to
  understand $\cU(\fgg)^K$ actions in the setting of a see-saw pair.  Our
  results strongly suggest that derived functor constructions are
  compatible with local theta lifting.


\end{abstract}

\section{Introduction}\label{sec:intro}
In this paper, we will study some small representations obtained by
applying derived functors on certain local theta lifts. The main objective
of this paper is to show that the resulting representations are still
theta lifts.  This project is motivated by \cite{WallachZhu2004}.

\subsection{}\label{ss:der}
For a Harish-Chandra pair $(\fgg,\rK)$, we denote by $\sC(\fgg,\rK)$ the
category of $(\fgg, \rK)$-modules (not necessarily admissible).  For a
real reductive group $\rG$, $(\fgg,\rK)$ is a Harish-Chandra pair,
where $\fgg= \Lie(\rG)_\bC$ and $\rK$ is a maximal compact subgroup of
$\rG$. Let $\rM$ be a subgroup of $\rK$.  Zuckerman functor
$\Gamma_{\fgg,\rM}^{\fgg,\rK}\colon \sC(\fgg,\rM)\to \sC(\fgg,\rK)$ is
the right adjoint functor of the forgetful functor
$\cF_{\fgg,\rK}^{\fgg,\rM}\colon \sC(\fgg,\rK)\to \sC(\fgg,\rM)$.
This functor is left exact and usually send a module in $\sC(\fgg,\rM)$
to zero.  On the other hand, its derived functors $R^j\Gamma_{\fgg,\rM}^{\fgg,\rK}$
construct interesting objects in $\sC(\fgg,\rK)$.

Using these derived functors, one can transfer representations between
different real forms of a complex reductive group as follows:  Let
$\fgg$ be the complex Lie algebra of a complex reductive group $\rG_\bC$.
Let $\sigma_1$ and $\sigma_2$ be two commuting involutions on $\fgg$.
They define two Harish-Chandra pairs $(\fgg,\rK_i)$ and the
corresponding real forms $\rG_i\in \rG_\bC$ such that $\fkk_i :=
\Lie(\rK_i)_\bC = \fgg^{\sigma_i}$. Therefore, every Lie algebra in the
following diagram is a symmetric subalgebra of the Lie algebra above
it.  
\def\ldsubseteq{\rotatebox{45}{$\subseteq$}} 
\def\rdsubseteq{\rotatebox{-45}{$\supseteq$}}
\[
\xymatrix@!=2pt {
  & \fgg \ar@{}[ld]_{\sigma_1}|{\ldsubseteq}\ar@{}[rd]^{\sigma_2}|{\rdsubseteq}&  \\
  \fkk_1 \ar@{}[rd]_{\sigma_2}|{\rdsubseteq}& & \fkk_2 \ar@{}[ld]^{\sigma_1}|{\ldsubseteq}\\
  & \fkk_1\cap \fkk_2 & }
\]

Let $\rM := \rK_1\cap \rK_2$. We define functor $\Gamma^j$ by
composing the forgetful functor and derived functors for each
non-negative integer $j$:
\[
\xymatrix@C=3em{ \Gamma^j : \sC(\fgg,\rK_1)
  \ar[r]^<>(0.5){\cF_{\fgg,\rK_1}^{\fgg,\rM}}& \sC(\fgg,\rM)
  \ar[r]^<>(0.5){\rR^j\Gamma_{\fgg,\rM}^{\fgg,\rK_2}}
  &\sC(\fgg,\rK_2).}
\]
We call $\Gamma^j$ the {\em transfer functor}, which transfers
$(\fgg,\rK_1)$-modules into $(\fgg,\rK_2)$-module.


\subsection{}\label{sec:i.theta}
Let $G_\bC$ be a classical complex group. Let $G_1$ and $G_2$ be
subgroups of $G_\bC$ with commuting Cartan involutions as in
\Cref{ss:der}.  Let $(G_1,G'_1)$ and $(G_2, G'_2)$ be two real
reductive dual pairs such that $G'_1$ and $G'_2$ are real forms of a
complex group $G'_\bC$. We always denote by $\wtG$ certain double
covering of $G$. Let $\theta_i \colon \sR(\fgg',\wtK'_i;\sY_i) \to
\sR(\fgg,\wtK_i;\sY_i)$ be the theta lifting map from $\wtG'_i$ to
$\wtG_i$, where $\sR(\fgg,\wtK_i)$ is the set of infinitesimal
equivalent classes of admissible irreducible
$(\fgg,\wtK_i)$\mbox{-}modules in the domain of theta lifting
(c.f. \Cref{sec:ugkact}) and $\sY_i$ are fixed Fock-models of
oscillator representations.  Set $\rK_i := \wtK_i$, $M = K_1\cap K_2$
and $\rM := \wtK_1\cap \wtK_2 =\wtM$. We have
\begin{equation}\label{eq:gamma1}
\xymatrix{
  \Gamma^j := \rR^j\Gamma_{\fgg,\wtM}^{\fgg,\wtK_2}\circ
  \cF_{\fgg,\wtK_1}^{\fgg,\wtM} \colon \sC(\fgg,\wtK_1)\ar[r]&   \sC(\fgg,\wtM)
  \ar[r]& \sC(\fgg,\wtK_2).}
\end{equation}

We will exhibit some relationships between the transfer functors
$\Gamma^j$ and theta lifting maps $\theta_i$.  An optimistic guess is
that there exists certain operation ``?''  filling the gap and make
the diagram \eqref{eq:dig} commute. Here the inclusions in
\eqref{eq:dig} identify an infinitesimal equivalent class of
Harish-Chandra modules with an element in it.
\begin{equation}\label{eq:dig}
  \vcenter{
    \xymatrix@R=2em@C=1.5em{
      \sR(\fgg',\wtK'_1;\sY_1) \ar[r]^{\theta_1}\ar@{..>}[d]_{?}&  \sR(\fgg,\wtK_1;\sY_1) \ar@{^(->}[r]&\sC(\fgg,\wtK_1)
      \ar[d]^{\displaystyle\Gamma^j}\\
      \sR(\fgg',\wtK'_2;\sY_2) \ar[r]^{\theta_2}& \sR(\fgg,\wtK_2;\sY_2) \ar@{^(->}[r] & \sC(\fgg,\wtK_2)
    }}
\end{equation}
However, the actual relationships are more subtle. For example,
Theorem~\ref{thm:e1} shows that $\Gamma^j\theta_1(\rho)$ for $\rho\in
\sR(\fgg',\wtK'_1;\sY_1)$ could be reducible and its irreducible
components are theta lifts from different real reductive dual pairs.

\subsection{}\label{sec:thm}
We fix a non-trivial unitary character of $\bR$. Then we could fix the
Fock-models $\sY_i$ and the corresponding oscillator representations
in \eqref{eq:dig} (see \Cref{sec:Fock}).  We denote by $\Theta_i$ the
full theta lifting map of pair $(G_i,G'_i)$ (see \Cref{sec:Howe}).
Our main theorem is the following.
\begin{thmA}\label{thm:scaler}\label{THM:SCALER}
  Let $G'_1$ and $G'_2$ be two real forms of a classical complex Lie
  group $G'_\bC$ such that $(G_i, G'_i)$ form real reductive dual
  pairs.  Let $\rho_1$ and $\rho_2$ be characters of $\wtG'_1$ and
  $\wtG'_2$ respectively, $\tau_1$ be an irreducible
  $(\fkk_2,\wtK_1\cap \wtK_2)$-module and $\tau_2$ be an irreducible
  $\wtK_2$-module. Suppose that:
  \begin{enumerate}[(a)]
  \item \label{thmA.a} $\rho_1|_{\fgg'} = \rho_2|_{\fgg'}$;
  \item \label{thmA.b} $\tau_2$ occurs in $\Theta_2(\rho_2)$;
  \item \label{thmA.c} there exists a non-zero homomorphism $T\in
    \Hom_{\fkk_2, \wtK_1\cap\wtK_2}(\Theta_1(\rho_1), \tau_1)$, such
    that $\tau_2$ occurs in the image of $\Gamma^jT\colon \Gamma^j
    \Theta_1(\rho_1) \to R^j(\Gamma_{\fkk_2,\wtK_1\cap
      \wtK_2}^{\fkk_2,\wtK_2})\tau_1$.
  \end{enumerate}
  Then the two $(\fgg,\wtK_2)$-modules, $\Gamma^j \Theta_1(\rho_1)$
  and $\Theta_2(\rho_2)$, have isomorphic irreducible subquotients
  with the common $\wtK_2$-type $\tau_2$.
\end{thmA}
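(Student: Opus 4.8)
The plan is to exploit the two-step factorization of the transfer functor $\Gamma^j$ through $\sC(\fgg,\wtM)$ and to compare the two sides by tracking a single $\wtK_2$-type through an explicitly constructed chain of nonzero maps. The key structural input is that, by the definition of the full theta lift $\Theta_i$ as a quotient of the oscillator representation $\sY_i$ by the maximal $\fgg'$-isotypic-of-type-$\rho_i$ submodule, hypothesis \eqref{thmA.a} forces the two Howe quotients $\Theta_1(\rho_1)$ and $\Theta_2(\rho_2)$ to share a common ``ambient'' description once we restrict attention to the joint compact group $\wtM = \wtK_1\cap\wtK_2$ acting on the Fock model; this is exactly the see-saw mechanism alluded to in the abstract, where the $\cU(\fgg)^{\wtK}$-action is the tool that lets one move the $\fgg'$-invariance condition from one member of the see-saw to the other. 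So the first step is: realize both $\Theta_1(\rho_1)$ and $\Theta_2(\rho_2)$ inside a common $(\fgg,\wtM)$-module coming from the Fock space, and use the $\cU(\fgg)^{\wtM}$-action (via \Cref{sec:ugkact}) to produce a nonzero $(\fgg,\wtM)$-map relating the forgetful restriction $\cF_{\fgg,\wtK_1}^{\fgg,\wtM}\Theta_1(\rho_1)$ to (a submodule or quotient of) $\cF_{\fgg,\wtK_2}^{\fgg,\wtM}\Theta_2(\rho_2)$.

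Next I would apply $R^j\Gamma_{\fgg,\wtM}^{\fgg,\wtK_2}$ and use functoriality together with hypothesis \eqref{thmA.c}. The map $T\in\Hom_{\fkk_2,\wtM}(\Theta_1(\rho_1),\tau_1)$ is, after forgetting down to $\fkk_2$, a morphism of $(\fkk_2,\wtM)$-modules; the crucial compatibility is that the Zuckerman functor for the pair $(\fgg,\wtM)\rightsquigarrow(\fgg,\wtK_2)$ and the one for $(\fkk_2,\wtM)\rightsquigarrow(\fkk_2,\wtK_2)$ agree on $\wtK_2$-types, because the Zuckerman functor is computed from the $(\fkk_2,\wtM)$-structure alone (it only uses the group pair $(\wtK_2,\wtM)$ and $\fkk_2 = \Lie(\wtK_2)_\bC$, not the full $\fgg$). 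Hence the hypothesis that $\tau_2$ occurs in the image of $\Gamma^j T$ tells us $\tau_2$ survives in $\Gamma^j\Theta_1(\rho_1)$ — i.e. $\Gamma^j\Theta_1(\rho_1)$ has a subquotient with $\wtK_2$-type $\tau_2$. Combined with hypothesis \eqref{thmA.b}, which says $\tau_2$ occurs in $\Theta_2(\rho_2)$, both modules contain an irreducible subquotient carrying $\tau_2$.

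The final step is to identify these two $\tau_2$-carrying irreducible subquotients. For this I would invoke a multiplicity-one / uniqueness principle: in the relevant range (the domain of theta lifting, as in \Cref{sec:ugkact}), an irreducible $(\fgg,\wtK_2)$-module is determined by the data of its lowest or distinguished $\wtK_2$-type together with the $\cU(\fgg)^{\wtK_2}$-action on that type — this is the reason the $\cU(\fgg)^{\wtK}$-action is highlighted as ``the key technique.'' Concretely, the common realization from Step 1 means the $\cU(\fgg)^{\wtM}$-module generated by the $\tau_2$-isotypic vectors is the same on both sides; pushing through $R^j\Gamma$ and restricting to the $\tau_2$-isotypic component, one gets the same $\cU(\fgg)^{\wtK_2}$-module structure on the $\tau_2$-part of both $\Gamma^j\Theta_1(\rho_1)$ and $\Theta_2(\rho_2)$, and by the classification this pins down the irreducible subquotient uniquely.

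\textbf{Main obstacle.} The hard part will be Step 1 and Step 3 simultaneously: one must make precise the sense in which the see-saw identification of the Fock model intertwines the $\fgg'$-invariance conditions defining $\Theta_1(\rho_1)$ and $\Theta_2(\rho_2)$ \emph{and} is compatible with the $\cU(\fgg)^{\wtM}$-action — in particular controlling what happens to this action under the derived Zuckerman functor $R^j\Gamma_{\fgg,\wtM}^{\fgg,\wtK_2}$, which does not commute with $\cU(\fgg)$ in general but should commute with $\cU(\fgg)^{\wtK_2}$ on the level of $\wtK_2$-isotypic components. Establishing that ``$R^j\Gamma$ is $\cU(\fgg)^{\wtK_2}$-equivariant on $\wtK_2$-types'' (presumably proved in \Cref{sec:ugkact}) and that a nonzero map of $\tau_2$-isotypic $\cU(\fgg)^{\wtK_2}$-modules forces isomorphic irreducible subquotients is where the real content lies; everything else is functoriality bookkeeping.
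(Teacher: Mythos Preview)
Your overall architecture is right in outline --- use Harish-Chandra's principle that an irreducible subquotient containing a fixed $\wtK_2$-type $\tau_2$ is determined by the $\cU(\fgg)^{\wtK_2}$-action on the $\tau_2$-isotypic component, and transport this action through $\Gamma^j$ by functoriality --- but Step~1 is a genuine wrong turn, and you are missing the one structural ingredient that makes the comparison go through.

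There is no ``common ambient $(\fgg,\wtM)$-module'' containing both $\Theta_1(\rho_1)$ and $\Theta_2(\rho_2)$, and the paper never attempts to build a $(\fgg,\wtM)$-map between them. The Fock models $\sY_1,\sY_2$ live over different real symplectic spaces, and there is no direct comparison map available. What the paper does instead is compute, \emph{separately} on each side, the character by which $\cU(\fgg)^{\wtK_2}$ acts on the relevant Hom-space, and then observe that the two characters coincide. On the $\Theta_1$ side one uses the see-saw pair $(G_1,G'_1)$ versus $(H,H')$ where $H\subset G_1$ is the subgroup with complexified Lie algebra $\fkk_2$; the point is that $\cU(\fgg)^{\wtH}=\cU(\fgg)^{H_\bC}=\cU(\fgg)^{K_2}=\cU(\fgg)^{\wtK_2}$ (you wrote $\cU(\fgg)^{\wtM}$, which is the wrong invariant algebra). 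On the $\Theta_2$ side one uses the compact see-saw $(G_2,G'_2)$ versus $(K_2,K_2')$.

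The ingredient you are missing, and which makes each of these computations tractable, is Helgason's theorem (\Cref{thm:scalerk}): for a \emph{character} $\rho$ of a symmetric subgroup, the natural map $\cZ(\fgg)\to \cU(\fgg)^{\rH}/(\Ann_{\cU(\fhh)}(\rho)\,\cU(\fgg)\cap\cU(\fgg)^{\rH})$ is surjective. Combined with \Cref{lem:ugk} (which you correctly gesture at) this gives \Cref{thm:ugkcor}: $\cU(\fgg)^{\wtH}$ acts on $\Hom_{\fhh,\wtM}(\Theta(\rho),V_\tau)$ via a character determined solely by $\Ann_{\cU(\fgg')}(\rho)$ and the $\cZ(\fhh)$-character $\chi_\tau$, independently of the real form. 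Hypothesis~(a) gives $\Ann(\rho_1)=\Ann(\rho_2)$; Lemma~\ref{lem:derugkact}(ii) gives $\chi_{\tau_1}=\chi_{\tau_2}$ since $\tau_2\subset\Gamma^j\tau_1$; so the two characters agree, and Harish-Chandra's criterion finishes. Without Helgason's theorem you have no reason to expect the $\cU(\fgg)^{\wtK_2}$-action to be by a scalar, let alone the \emph{same} scalar on both sides --- and this is precisely why the theorem is restricted to lifts of characters.
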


Roughly speaking above theorem says that the $K$-spectrums determine
the theta lifts of characters.  In applications, $\Theta_{1}(\rho_1)$
will be a direct sum of irreducible $(\fkk_2, \wtM)$-modules and the
precise $K$-spectrums of their derived functor modules are well
studied, see
\cite{Enright,Enright1985,Frajria1991,Wallach1994Trans,WallachZhu2004}
and \Cref{sec:ex}.

We highlight the key observation \Cref{lem:ugk}. It may have potential
usage beyond the case of lifts of characters.

This paper is a part of the author's Ph.D. Thesis.



\subsection*{Notation}
In this paper, little Greek letters, for example $\tau$, denote the
infinitesimal equivalent classes of $(\fgg,\rK)$-modules and $V_\tau$
denotes a realization of $\tau$ on a vector space.  Moreover, $\tau$
also denotes the maps from $\cU(\fgg)$ and $\rK$ to $\End_\bC(V_\tau)$.
We will not distinguish representations and their isomorphism classes.

When there is no confusion, we omit the symplectic space $W$ in
$\Sp(W)$ and denote it by $\Sp$.  For any subgroup $G < \Sp$, $\wtG$
denotes the inverse image of $G$ in the metaplectic group $\wtSp$. We
always write $\fgg$ for the complexification of $\Lie(G)$.  Let $K$
and $K'$ denote the maximal compact subgroups of $G$ and $G'$. In this
paper, a complex Lie subalgebra $\fhh\subset \fgg$ would always be
stable under the fixed Cartan involution. Let $H$ denote the maximal
Lie subgroup of $G$ such that $\Lie(H)_\bC = \fhh$.  Therefore $M :=
H\cap K$ is a maximal compact subgroup of $H$.  Every real reductive
dual pair $(G,G')$ corresponds to a complex reductive dual pair
$(G_\bC,G'_\bC)$ by base change.  We always denote by $V^G$ the
subspace of $G$-invariants in a $G$-module $V$. In particular,
$\cU(\fgg)^H$ is the subalgebra of $H$-invariants in the universal
enveloping subalgebra $\cU(\fgg)$.  In the setting of transfer, $H$ is
always a symmetric subgroup of $G_1$ and $M = K_1\cap H = K_1\cap K_2
$ is a maximal compact subgroup of $H$.



\section{Theta lifting, see-saw pair and joint $\cU(\fgg)^K$\mbox{-}actions}\label{sec:ugkact}

\subsection{}
We have following generalization of Howe's construction of
maximal quotient.
\begin{definition}
  Let $\rG$ be a real reductive group. Let $\rK$ and $\rH$ be
  subgroups of $\rG$ and let $\rM$ be a subgroup of $\rK\cap \rH$.  Let $V$
  be a $(\fgg,\rK)$-module and $U$ be an irreducible admissible 
  $(\fhh,\rM)$-module.  Define
  \[ \Omega_{V,U} = V / \cN_{V,U}, \quad \text{where}\quad \cN_{V,U} =
  \bigcap_{T\in \Hom_{\fhh,\rM}(V,U)}\Ker(T).
  \]
\end{definition}
Clearly $\Omega_{V,U}$ has joint actions of $\rH$ and
$\cU(\fgg)^{\rH}$.  It is known to experts (see \cite[Lemma~2.III.4]{MVW1987} or \cite[Section~2.3.3]{MaT}) that
\begin{equation}\label{eq:tensor}
  \Omega_{V,U} \cong U\otimes U',
\end{equation}
where $\cU(\fgg)^\rH$ acts on $U'\cong
\Hom_{\fhh,\rM}(U,\Omega_{V,U})$.

\smallskip

The key property of $\Omega_{V,U}$ is the following equation:
\begin{equation*}
  \Hom_{\fhh,M}(V,U) \cong \Hom_{\fhh,\rM}(\Omega_{V,U},U).
\end{equation*}
This equation leads to the well known see-saw pair argument due to
Kudla (c.f. \Cref{ssec:seesaw}).

\subsection{}\label{sec:Howe}
Now let $(G,G')$ be a real reductive dual pair in $\Sp$.  Let $\rU$ be
a fixed maximal compact subgroup of $\Sp$. Set $\rG:=\wtSp$, $\rK :=
\widetilde{\rU}$, $\rH:=\wtG$, $\rM := \wtK$, $V:=\sY$ the Fock model
and $U := V_\rho$ an irreducible admissible $(\fgg,\wtK)$-module. Set
$\Theta(\rho):=U'$. Then \eqref{eq:tensor} gives
\[
\Omega_{\sY, V_\rho} \cong V_{\rho} \otimes \Theta(\rho).
\]
Howe~\cite{Howe1989Tran} shows that when $\Theta(\rho)$ is non-zero,
it is a finite length $(\fgg',\wtK')$-module and it has a unique
irreducible quotient $\theta(\rho)$.  We call $\Theta(\rho)$ the
\em{full (local) theta lift}\footnote{It is called the maximal Howe
  quotient or big theta lift in some literatures.} of $\rho$ and
$\theta(\rho)$ the \em{(local) theta lift} of $\rho$.

Denote by $\sR(\fgg, \wtK;\sY)$ the set of infinitesimal equivalent
classes of irreducible admissible $(\fgg,\wtK)$-modules such that
$\Theta(\rho) \neq 0$, i.e. which could be realize as a quotient of
$\sY$.  Then $\rho\mapsto \theta(\rho)$ induces a one to one
correspondence
\[
\xymatrix{ \sR(\fgg,
  \wtK;\sY)\ar@{<->}[r]^<>(.5){\theta}&\sR(\fgg',\wtK';\sY), }
\]
which is called {\em (local) theta correspondence} or {\em Howe correspondence}.
The roles of $G$ and $G'$ are symmetric. By abusing notation, we also
denote by $\Theta$ the lifting from $\wtG'$ to $\wtG$.

\subsection{} \label{sec:Harish} Let $V$ be an admissible
$(\fgg,K)$-module and $U$ be an irreducible $K$-module. Set $\rG :=G$,
$\rH:=\rM:=K$.  Then $\Omega_{V,U}$ is the $U$\mbox{-}isotypic
component of $V$.  There is a well known result of
Harish\mbox{-}Chandra~\cite{Lep} that irreducible constituents
containing the $K$\mbox{-}type $U$ in the $(\fgg,K)$-module $V$ are in
one to one correspondence to irreducible consistuents of the
$\cU(\fgg)^K$\mbox{-}module $U' := \Hom_{K}(U,\Omega_{V,U})$.  Note
that the right $\cU(\fgg)^K$-module, $\Hom_{K}(V,U) =
\Hom_{K}(\Omega_{V,U},U)$, is the dual of the finite dimensional left
$\cU(\fgg)^K$-module $U'$. So we will only consider $\Hom_{K}(V,U)$ in
the rest of this paper.

\subsection{See-saw pairs}\label{ssec:seesaw}
A see-saw pair is a pair of reductive dual pairs $(G,G')$ and $(H,H')$ in
$\Sp$ such that $H < G$ and $H'> G'$. The relationship
between these groups is given in the following diagram.
\def\dsubset{{\rotatebox{90}{$\subseteq$}}}
\[
\xymatrix@!=1em{G \ar@{-}[dr] \ar@{}[d]|{\dsubset} & H' \ar@{}[d]|{\dsubset}\\
  H\ar@{-}[ur] & G' }
\]

\begin{lemma}\label{lem:seesaw}
 Let $\tau\in \sR(\fhh,\wtKH;\sY)$ and $\rho\in
  \sR(\fgg',\wtKGp;\sY)$. Then
  \begin{equation}\label{eq:isohomsp}
    \Hom_{\fhh, \wtKH}(\Theta(\rho),V_\tau) \cong
    \Hom_{(\fhh,\wtKH)\times (\fgg',\wtKGp)}(\sY, V_{\tau}\otimes
    V_{\rho})
    \cong \Hom_{\fgg',\wtKGp}(\Theta(\tau),V_{\rho})
  \end{equation}
  Here $\Hom_{\fhh,\wtKH}(\Theta(\rho),V_\tau)$ is a right
  $\cU(\fgg)^{\wtH}$-module and
  $\Hom_{\fgg',\wtKGp}(\Theta(\tau),V_{\rho})$ is a right
  $\cU(\fhh')^{\wtG'}$-module by pre-composition. The
  first isomorphism in \eqref{eq:isohomsp} is\/
  $\cU(\fgg)^{\wtH}$-equivariant and the second isomorphism is
  $\cU(\fhh')^{\wtG'}$\mbox{-}equivariant.
\end{lemma}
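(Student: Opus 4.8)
The plan is to run Kudla's see-saw argument in the $(\fgg,\rK)$-module setting: reduce the Fock model $\sY$ to the two maximal Howe quotients attached to the two dual pairs of the see-saw, and then strip off the common tensor factor by Schur's lemma; the two isomorphisms of \eqref{eq:isohomsp} are the two halves of this computation, interchanged by the symmetry swapping $(H,H')$ with $(G',G)$. (The hypotheses $\tau\in\sR(\fhh,\wtKH;\sY)$, $\rho\in\sR(\fgg',\wtKGp;\sY)$ are used exactly to have $\Theta(\tau),\Theta(\rho)$ and their module structures from \Cref{sec:Howe} at hand.) \emph{First} I would show that every $\phi\in\Hom_{(\fhh,\wtKH)\times(\fgg',\wtKGp)}(\sY,V_\tau\otimes V_\rho)$ factors through both quotient maps $\sY\onto\Omega_{\sY,V_\tau}$ and $\sY\onto\Omega_{\sY,V_\rho}$ (the Howe quotients for the pairs $(H,H')$ and $(G',G)$ respectively). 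Since $H$ centralizes $G'$ in $\Sp$, in the external tensor product $V_\tau\otimes V_\rho$ the pair $(\fhh,\wtKH)$ acts only through $V_\tau$ and $(\fgg',\wtKGp)$ only through $V_\rho$; so, given $v\in\cN_{\sY,V_\rho}$, writing $\phi(v)=\sum_i w_i\otimes u_i$ with the $w_i\in V_\tau$ linearly independent and picking dual functionals $w_i^*$, each $(w_i^*\otimes\id_{V_\rho})\circ\phi$ lies in $\Hom_{\fgg',\wtKGp}(\sY,V_\rho)$ and hence annihilates $v$, forcing all $u_i=0$, i.e. $\phi(v)=0$. Thus $\phi$ kills $\cN_{\sY,V_\rho}$, and symmetrically $\cN_{\sY,V_\tau}$, so
\[
\Hom_{(\fhh,\wtKH)\times(\fgg',\wtKGp)}(\sY,V_\tau\otimes V_\rho)\;\cong\;\Hom_{(\fhh,\wtKH)\times(\fgg',\wtKGp)}(\Omega_{\sY,V_\tau},V_\tau\otimes V_\rho),
\]
and likewise with $\Omega_{\sY,V_\rho}$ replacing $\Omega_{\sY,V_\tau}$.

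\emph{Next} I would substitute \eqref{eq:tensor} in the form of \Cref{sec:Howe}: for the dual pair $(H,H')$ one has $\Omega_{\sY,V_\tau}\cong V_\tau\otimes\Theta(\tau)$ as a module over $(\fhh,\wtKH)\times(\fhh',\wtK_{H'})$, with $\cU(\fsp)^{\wtH}$ acting on $\Theta(\tau)$ through its image in $\End(\Theta(\tau))$, which by Howe equals the image of $\cU(\fhh')$. Restricting along $\fgg'\hookrightarrow\fhh'$ and $\wtKGp\hookrightarrow\wtK_{H'}$, the right-hand side of the display becomes $\Hom_{(\fhh,\wtKH)\times(\fgg',\wtKGp)}(V_\tau\otimes\Theta(\tau),V_\tau\otimes V_\rho)$, where $(\fhh,\wtKH)$ now acts trivially on both $\Theta(\tau)$ and $V_\rho$. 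Using the natural isomorphism $\Hom_{\fhh,\wtKH}(V_\tau\otimes_\bC A,N)\cong\Hom_\bC(A,\Hom_{\fhh,\wtKH}(V_\tau,N))$ for a vector space $A$ with trivial action, together with $\Hom_{\fhh,\wtKH}(V_\tau,V_\tau\otimes V_\rho)\cong V_\rho$ (Schur's lemma, after commuting $\Hom_{\fhh,\wtKH}(V_\tau,-)$ past the directed union $V_\tau\otimes V_\rho=\bigcup_{V'\subset V_\rho}V_\tau\otimes V'$ over finite-dimensional $V'$, which is legitimate because the irreducible $V_\tau$ is generated by a single vector), one gets a natural isomorphism $\Hom_{\fhh,\wtKH}(V_\tau\otimes\Theta(\tau),V_\tau\otimes V_\rho)\cong\Hom_\bC(\Theta(\tau),V_\rho)$. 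Imposing the residual $(\fgg',\wtKGp)$-equivariance on both sides (naturality carries the two induced actions on $\Theta(\tau)$ and on $V_\rho$ onto each other) produces $\Hom_{\fgg',\wtKGp}(\Theta(\tau),V_\rho)$, which is the second isomorphism of \eqref{eq:isohomsp}. The first isomorphism follows by repeating this step with the two dual pairs of the see-saw exchanged, i.e. using $\Omega_{\sY,V_\rho}\cong V_\rho\otimes\Theta(\rho)$ and stripping off $V_\rho$.

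\emph{Finally}, for the equivariance claims: $\cU(\fgg)^{\wtH}\subseteq\cU(\fsp)^{\wtH}$ acts on $\sY$ commuting with $\fhh$ (via $\Ad(\wtH)$-invariance), with $\wtKH$, and with $(\fgg',\wtKGp)$ (since $G$ centralizes $G'$), so it preserves $\cN_{\sY,V_\rho}$ and acts on $\Theta(\rho)$ by $(\fhh,\wtKH)$-module endomorphisms; naturality of every step above then makes precomposition intertwine the first isomorphism, and the $\cU(\fhh')^{\wtG'}$-equivariance of the second is proved identically, noting $\cU(\fhh')^{\wtG'}\subseteq\cU(\fsp)^{\wtH}$ and that it acts on $\Theta(\tau)$ by $(\fgg',\wtKGp)$-module endomorphisms. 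The one delicate point, and the main obstacle, is precisely the infinite-dimensionality of the multiplicity spaces $\Theta(\tau),\Theta(\rho)$: the finite-dimensional tensor--hom identity is unavailable, and one must fall back on the compact generation of the irreducible Harish-Chandra module $V_\tau$ --- equivalently, on the multiplicity-space description recalled in \Cref{sec:Harish} --- to commute $\Hom_{\fhh,\wtKH}(V_\tau,-)$ past the colimit; everything else is bookkeeping with the commuting actions dictated by the see-saw diagram.
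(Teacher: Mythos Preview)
Your argument is correct and is exactly the paper's proof: factor through the Howe quotient $\Omega_{\sY,V_\rho}\cong V_\rho\otimes\Theta(\rho)$ (resp.\ $\Omega_{\sY,V_\tau}\cong V_\tau\otimes\Theta(\tau)$) and then strip off the irreducible tensor factor by Schur's lemma, with the equivariance following from naturality. The paper's four-line display is the same chain of isomorphisms you wrote, only run from the outer term inward and with the infinite-dimensional Schur step left implicit; your extra care about commuting $\Hom_{\fhh,\wtKH}(V_\tau,-)$ past the colimit is a detail the paper suppresses.
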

\proof It is clear from the following calculation.
\begin{equation*}
  \begin{split}
    \Hom_{\fhh,\wtKH}(\Theta(\rho), V_\tau) \cong &
    \Hom_{(\fhh,\wtKH)\times
      (\fgg',\wtKGp)}(\Theta(\rho)\otimes V_\rho, V_\tau\otimes V_\rho)\\
    \cong & \Hom_{(\fhh,\wtKH)\times
      (\fgg',\wtKGp)} (\Omega_{\sY,V_\rho}, V_\tau\otimes V_\rho)\\
    \cong & \Hom_{(\fhh,\wtKH)\times (\fgg',\wtKGp)} (\sY,
    V_\tau\otimes V_\rho) \cong \Hom_{\fgg',\wtKGp}(\Theta(\tau),
    V_\rho).\qed
  \end{split}
\end{equation*}

The above proof is formal and is actually valid for any category of
representations and any ``see-saw pairs'' of mutually commuting
subgroups when Schur's Lemma holds.

\subsection{}
In the case of local theta correspondence over real,
Lemma~\ref{lem:ugk} shows that the joint actions of $\cU(\fgg)^{\wtH}$
and $\cU(\fhh')^{\wtG'}$ on $\sY$ factor through the same subalgebra
of $\End_{\bC}(\sY)$. Therefore \eqref{eq:isohomsp} links the
$\cU(\fgg)^{\wtH}$ and $\cU(\fhh')^{\wtG'}$\mbox{-}actions on its two
sides.  We would like to point out that
Lee-Nishiyama-Wachi~\cite{LeeNishiyama2008} have obtained the lemma
when $H$ and $G'$ are both compact in a study of Capelli identities.
\Cref{lem:ugk} is a generalization of the correspondence of
infinitesimal characters~\cite{Przebinda1996inf}, where we set $(H,H')
:= (G,G')$.

\begin{lemma}\label{lem:ugk}
  Let $\omega$ be an oscillator representation of $\wtSp$ with Fock
  model $\sY$. Let $(G,G')$ and $(H,H')$ form a see-saw pair in $\Sp$
  such that $H<G$ and $G'<H'$.  Then
  \begin{enumerate}[(i)]
  \item as subalgebras of $\cU(\fgg)$ and $\cU(\fhh')$,
    \[
    \cU(\fgg)^{H_\bC} = \cU(\fgg)^H = \cU(\fgg)^{\wtH} \quad
    \text{and}\quad  \cU(\fhh')^{G'_\bC} = \cU(\fhh')^{G'} =
    \cU(\fhh')^{\wtG'} \quad \text{respectively};
    \]
  \item as subalgebras of $\End_\bC(\sY)$,
    \begin{equation*}
      \omega(\cU(\fgg)^{\wtH}) = \omega(\cU(\fgg)^{H_\bC}) =
      \omega(\cU(\fhh')^{G'_\bC})= \omega(\cU(\fhh')^{\wtG'}).
    \end{equation*}
  \end{enumerate}
  In particular, here exist a map (may not be unique and may not be an
  algebra homomorphism) $\XiGHp\colon\cU(\fgg)^{H_\bC}\to
  \cU(\fhh')^{G'_\bC}$ such that $\omega(x) = \omega(\XiGHp(x))$.
  Moreover, $\XiGHp$ could be defined only depending on the complex dual
  pair, but independent of real forms and 
  $\omega$. 
\end{lemma}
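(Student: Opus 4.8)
The plan is to reduce everything to the structure of the oscillator representation in the Fock model and the Weyl algebra. Recall that $\sY$ is a polynomial Fock space on which $\fsp = \fsp(W)_\bC$ acts, and the image $\omega(\cU(\fsp))$ is (essentially) the algebra of $\Sp$-finite polynomial coefficient differential operators, graded by degree; in particular $\omega$ is injective on $\cU(\fsp)$ modulo its center's action, and the image is a filtered algebra whose associated graded is (a quotient of) $\Sym(\fsp)$. For (i), the key point is that $\fgg^H = \fgg^{H_\bC} = \fgg^{\wtH}$ as subalgebras of $\fgg$: the adjoint action of $\wtH$ on $\fgg$ factors through $H$ (the cover acts trivially in the adjoint representation), and $H$-invariants and $H_\bC$-invariants in the finite-dimensional rational representation $\fgg$ coincide because $H_\bC$ is the complexification of the real reductive group $H$ (Zariski density of $H$ in $H_\bC$, applied to the rational action on $\fgg$, hence on all of $\cU(\fgg) = \bigcup_n \cU_n(\fgg)$). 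The same argument with $(G',H')$ in place of $(H,G)$ gives the second chain of equalities in (i). This step is essentially standard and I expect it to be short.

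For (ii), I would argue as follows. By the see-saw setup $H$ and $H'$ form a dual pair in $\Sp$, so by the classical theory $\omega(\cU(\fhh))$ and $\omega(\cU(\fhh'))$ are mutual commutants inside the algebra $\cD$ of $\Sp$-finite polynomial differential operators on $\sY$ (Howe's double commutant theorem in the Fock model — this is exactly the statement that underlies \eqref{eq:tensor}). Now $\cU(\fgg)^{H_\bC}$ is the centralizer of $H_\bC$, equivalently of $\fhh$ under the adjoint action, inside $\cU(\fgg)$; applying $\omega$ and using that $\omega(\fhh) \subset \omega(\fgg)$, we get that $\omega(\cU(\fgg)^{H})$ lies in the commutant of $\omega(\fhh)$, which by double commutant is $\omega(\cU(\fhh'))$. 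So $\omega(\cU(\fgg)^H) \subseteq \omega(\cU(\fhh'))$. Intersecting with the first-variable dual pair $(G,G')$: an element of $\omega(\cU(\fgg)^H)$ commutes with $\omega(\fhh)$, hence in particular with $\omega(\fgg') \subset \omega(\fhh')$ — wait, more directly, it already lies in $\omega(\cU(\fgg))$, so it lies in $\omega(\cU(\fgg)) \cap (\text{commutant of }\omega(\fhh))$. The reverse containment is the substantive half: given $\omega(y)$ with $y \in \cU(\fhh')^{G'}$, I must produce $x \in \cU(\fgg)^{H_\bC}$ with $\omega(x) = \omega(y)$. Here I would use that $\omega(\cU(\fhh')^{G'})$ commutes with both $\omega(\fgg')$ and $\omega(\fhh)$, hence with the algebra generated by $\omega(\fgg')$ and $\omega(\fhh)$ inside $\cD$. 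By the dual-pair commutant theorem applied to $(G,G')$, the commutant of $\omega(\fgg')$ in $\cD$ is $\omega(\cU(\fgg))$, so $\omega(y) \in \omega(\cU(\fgg))$; write $\omega(y) = \omega(x)$ for some $x \in \cU(\fgg)$. Then $\omega(x)$ commutes with $\omega(\fhh)$, and since $\omega$ is injective on the relevant quotient one deduces $x$ can be chosen $H$-invariant (averaging $x$ over the compact form of $H$, or over $H_\bC$ using complete reducibility of the adjoint action on $\cU(\fgg)$, does not change $\omega(x)$ because the ambiguity lies in $\Ker\omega$, which I must check is an $\fhh$-stable, hence $H$-stable, ideal). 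This gives $x \in \cU(\fgg)^{H_\bC}$ with $\omega(x) = \omega(y)$, completing equality of the images.

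The main obstacle I anticipate is the precise bookkeeping around $\Ker(\omega|_{\cU(\fsp)})$: to run the averaging argument one needs that this kernel is stable under the relevant adjoint actions and that "choosing $x$ invariant" is legitimate, i.e. that the projection onto $H_\bC$-invariants of $\cU(\fgg)$ descends compatibly to $\omega(\cU(\fgg))$. This should follow because $\omega$ is $\wtSp$-equivariant, hence $\wtG$- and $\wtH$-equivariant, so $\Ker\omega \cap \cU(\fgg)$ is $\Ad(\wtH)$-stable, hence $\Ad(H_\bC)$-stable by (i)-type reasoning, and Reynolds-operator/averaging applies. Once the images agree, the final sentence is immediate: for each $x$ in the (countable-dimensional) algebra $\cU(\fgg)^{H_\bC}$, pick any preimage $\XiGHp(x) \in \cU(\fhh')^{G'_\bC}$ of $\omega(x)$ under $\omega|_{\cU(\fhh')^{G'_\bC}}$; this is a well-defined (if non-canonical) set map, and since (i)–(ii) and the commutant identifications were phrased entirely in terms of $H_\bC, G'_\bC$ and the complexified dual pair $(G_\bC, G'_\bC)$, the construction of $\XiGHp$ depends only on the complex dual pair, not on the chosen real forms or on which oscillator representation $\omega$ we use (different $\omega$'s have the same algebra of $\Sp$-finite differential operators as image pattern). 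I would remark that $\XiGHp$ can be upgraded to a filtered map and that on associated graded it corresponds to the classical restriction $\Sym(\fgg)^{H_\bC} \to \Sym(\fhh')^{G'_\bC}$ coming from the moment maps, but that refinement is not needed for the statement as given.
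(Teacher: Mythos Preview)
Your argument is correct and rests on the same two ingredients as the paper's: (a) the adjoint action of $\wtH$ on $\cU(\fgg)$ factors through $H$, and $H$ is Zariski dense in $H_\bC$ (the paper phrases this as ``$H$ meets all connected components of $H_\bC$'', which is the point you need for disconnected cases such as orthogonal groups); and (b) Howe's classical invariant theory identification $\omega(\cU(\fgg)) = (\End^\circ)^{G'_\bC}$ for each dual pair. The difference is purely organizational in part~(ii). The paper bypasses your commutant-then-averaging detour by writing the symmetric one-line chain
\[
\omega(\cU(\fgg)^{H_\bC}) \;=\; \omega(\cU(\fgg))^{H_\bC} \;=\; \bigl((\End^\circ)^{G'_\bC}\bigr)^{H_\bC} \;=\; \bigl((\End^\circ)^{H_\bC}\bigr)^{G'_\bC} \;=\; \omega(\cU(\fhh')^{G'_\bC}),
\]
where the outer equalities are exactly your Reynolds-operator observation (equivariance of $\omega$ plus reductivity of $H_\bC$, $G'_\bC$), stated once and applied symmetrically; the middle equality is just commutativity of taking invariants for commuting group actions. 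Your route reaches the same conclusion but runs the averaging step by hand, and as written only for one inclusion---by the evident symmetry the other follows, so there is no gap. For the independence from real forms and from $\omega$, the paper makes this explicit by realizing $\End^\circ$ as the abstract quantum algebra $\Omega(\Wc)$ attached to the complex symplectic space, through which every Fock model factors; your phrase ``same image pattern'' is pointing at this, but naming $\Omega(\Wc)$ would make the claim precise.
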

\begin{proof}[Sketch of the proof (see {\cite[Section~2.3.4]{MaT}} for
  details)]
  The actions of the double coverings on its Lie algebra
  factor through the linear group, $\cU(\fgg)^{\wtH} = \cU(\fgg)^H$
  and $\cU(\fhh')^{\wtG'} = \cU(\fhh')^{G'}$.  By the classification
  of real reductive dual pairs, $H$ meets all the connected components
  of $H_\bC$. Hence, $\cU(\fgg)^H = \cU(\fgg)^{H_\bC}$ and (i)
  follows.

  To prove (ii), it suffices to show $\omega(\cU(\fgg)^{H_\bC}) =
  \omega(\cU(\fhh')^{G'_\bC})$.  Let $\Wc$ be the complex symplectic
  space defining $\fsp$ and let $\fee = \Wc \oplus \bC$ be the corresponding
  Heisenberg Lie algebra.  Under the notation in~\cite{Howe1989Rem},
  let $\End^\circ$ be the image of $\cU(\fee)$ in $\End(\sY)$.
  Howe~\cite[Theorem~7]{Howe1989Rem} shows that  
  \[
  \omega(\cU(\fgg)) = (\End^\circ)^{G'_\bC} \quad \text{and}\quad
  \omega(\cU(\fhh')) = (\End^\circ)^{H_\bC}
  \]
  by the classical invariant theory.
  Therefore
  \[
  \omega(\cU(\fgg)^{H_\bC}) = \omega(\cU(\fgg))^{H_\bC} =
  ((\End^\circ)^{G'_\bC})^{H_\bC} = ((\End^\circ)^{H_\bC})^{G'_\bC} =
  \omega(\cU(\fhh')^{G'_\bC}).
  \]

  Note that $\End^\circ$ could be realized as an abstract quantum
  algebra which is only depending on $\Wc$.
  This ensures that $\XiGHp$ could be defined
  independent of real forms and oscillator representations. See
  \Cref{sec:Fock}.
\end{proof}

\subsection{Remarks on Fock-models}\label{sec:Fock}
The materials in the section is due to Kudla. See
\cite[Section~2]{Adams2007} or \cite[Section~2.3.4]{MaT}. We present a
sketch here for completeness.

\def\Wc{W_\bC} Let $W_\bC$ be a complex symplectic space with
symplectic form $\inn{}{}$.  Fix a primitive fourth root of unity $i =
\sqrt{-1}$. We hence fixed a unitary character $\varphi(r) = \exp(ir)$
of $\bR$.

Define the quantum algebra $\Omega(\Wc) := \cT/\cI$ where $\cT$ is the
tensor algebra of $\Wc$ and $\cI$ is the two side ideal in $\cT$
generated by $ \set{v\otimes w-w\otimes v - i\inn{v}{w}|v,w\in
  W_\bC}$.  The algebra $\Omega(\Wc)$ has a natural filtration induced by the
natural filtration on $\cT$.  Let $\Omega_j(\Wc)$ be the space of
elements with degree less than or equal to $j$. Let $\fee = \Wc\oplus \bC$
be the Heisenberg Lie algebra of $\Wc$ with center $\bC$.  Then
there is a unique isomorphism of Lie algebras,
\[\xymatrix{\omega_\bC\colon \sp(W_\bC)\ltimes \fee \; \ar@{^(->>}[r]&
  \Omega_2(\Wc)}\]
extending the natrual map $\Wc \to \Omega_1(\Wc)$. We extend above
map to universal enveloping algebras and still call it $\omega_\bC$.

Now fix a real form $W$ of $\Wc$, i.e. $\Wc = W\otimes_\bR \bC$.  
A totally complex polarization of $\Wc$ with respect to $W$ is a
decomposition $\Wc= X\oplus Y$ such that $X$ and $Y$ are maximal
isotropic $\bC$\mbox{-}subspaces in $\Wc$ and $X \cap W = 0$. 
The Fock-model of the oscillator representation of
$\widetilde{\Sp(W)}$ associated with the central character $\varphi$
of the Heisenberg group is given by
\begin{equation*}
  \sY := \Omega(\Wc)/\Omega(\Wc)X.
\end{equation*}
Here $\sp(W)_\bC = \sp(W_\bC)$ acts on $\sY$ by compositing $\omega_\bC$
and the left multiplication. Moreover, $\sY\cong \bC[X]$ as vector
spaces and $\Omega(\Wc) \cong \End^\circ$ in Howe's
picture~\cite{Howe1989Rem}.

Let $(G_\bC,G'_\bC)$ be a complex dual pair in $\Sp(\Wc)$. Recall that
$\fgg := \Lie(G_\bC)$ and $\fgg' := \Lie(G'_\bC)$.
Howe~\cite{Howe1989Rem}'s result is rephrased into
\begin{equation*}
  \omega_\bC(\cU(\fgg)) = \Omega(\Wc)^{G'_\bC} \quad \text{and}\quad
  \omega_\bC(\cU(\fgg')) = \Omega(\Wc)^{G_\bC}.
\end{equation*}

The following lemma is a rephrase of the equation $(2.4)$ in
\cite{Howe1989Tran} and one can check it case by case according to the
classification of real reductive dual pairs.
\begin{lemma}
  Let $G$ and $G'$ be real forms of $G_\bC$ and $G'_\bC$ respectively
  such that $(G,G')$ is a real reductive dual pair. Then there is a
  real form $W$ of $\Wc$ such that
  \[
  \Lie(G) = \sp(W)\cap \fgg, \quad \Lie(G') = \sp(W)\cap \fgg',
  \]
  where $\fgg$, $\fgg'$, $\Lie(G)$, $\Lie(G')$ and $\sp(W)$ are
  considered to be Lie subalgebras of $\sp(\Wc)$. \qed
\end{lemma}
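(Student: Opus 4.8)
The plan is to run through the classification of real reductive dual pairs and, for each entry on the list, to exhibit the natural real form $W$ of $\Wc$ and then to verify the two identities by a short uniform argument. A general real reductive dual pair is, up to an orthogonal direct sum operation under which one simply adds the corresponding real symplectic spaces, a product of irreducible ones, so it suffices to treat the irreducible pairs.

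Recall that every irreducible real reductive dual pair $(G,G')$ is produced by Howe's construction from a real division algebra $D\in\{\bR,\bC,\bH\}$ with an involution: in the Type I case one takes a right $D$-module $V$ carrying a nondegenerate $\epsilon$-Hermitian form and a left $D$-module $V'$ carrying a nondegenerate $(-\epsilon)$-Hermitian form and sets $W:=V\otimes_D V'$, equipped with the real symplectic form obtained by composing the product of the two forms with the reduced trace $\tr_{D/\bR}$; in the Type II case one takes $D$-modules $V,V'$ and sets $W:=\Hom_D(V',V)\oplus\Hom_D(V,V')$ with its tautological symplectic pairing. In each case $G$ and $G'$ are realised as the isometry (respectively general linear) groups of $V$ and of $V'$, so $\Lie(G)$ and $\Lie(G')$ are mutually centralising subalgebras of $\sp(W)$.

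First I would, going through the list, fix for each pair an identification of complex symplectic spaces $\Wc\cong W\otimes_\bR\bC$ carrying the given complex dual pair $(G_\bC,G'_\bC)\subset\Sp(\Wc)$ onto the complexification of $(G,G')\subset\Sp(W)$, so that $\fgg=\Lie(G)\otimes_\bR\bC$ and $\fgg'=\Lie(G')\otimes_\bR\bC$ inside $\sp(\Wc)$. The existence of such an identification is exactly the compatibility recorded in \cite[(2.4)]{Howe1989Tran}; it is checked case by case, e.g. $(\O(p,q),\Sp(2m,\bR))$ complexifies $(\O(p+q,\bC),\Sp(2m,\bC))$ on $W=\bR^{p,q}\otimes_\bR\bR^{2m}$, the pair $(\U(p,q),\U(r,s))$ complexifies $(\GL(p+q,\bC),\GL(r+s,\bC))$ on $W=\Hom_\bC(\bC^{r,s},\bC^{p,q})\oplus\Hom_\bC(\bC^{p,q},\bC^{r,s})$, the pair $(\Sp(p,q),\O^*(2m))$ complexifies $(\Sp(2(p+q),\bC),\O(2m,\bC))$, and similarly for the remaining quaternionic and Type II pairs. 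This is the only place where the classification is genuinely needed: one must check that the signatures, and the underlying field of scalars, match so that $G$ and $G'$ are simultaneously isometries of a single real form $W$ of $\Wc$.

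With $W$ in hand the proof closes uniformly. The inclusions $\Lie(G)\subseteq\sp(W)\cap\fgg$ and $\Lie(G')\subseteq\sp(W)\cap\fgg'$ are immediate. Conversely, write $\fgg=\Lie(G)\oplus i\,\Lie(G)$ inside $\sp(\Wc)=\sp(W)\oplus i\,\sp(W)$; if $x=a+ib$ with $a,b\in\Lie(G)$ happens to lie in $\sp(W)$, then $ib=x-a$ is a real endomorphism of $W$, hence sends $W$ into $W\cap iW=0$, so $b=0$ and $x=a\in\Lie(G)$; the identical argument yields $\sp(W)\cap\fgg'=\Lie(G')$. I expect the genuine work — and the main obstacle — to be not this last step but the bookkeeping in the case-by-case identification $\Wc\cong W\otimes_\bR\bC$, in particular keeping the two complex structures straight in the unitary and quaternionic cases (the $\bC$ acting as scalars on $V$ and $V'$ versus the complex structure that makes $\Wc$ a complex symplectic space) and confirming the compatibility of signatures.
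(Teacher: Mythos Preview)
Your proposal is correct and follows essentially the same approach as the paper: the paper gives no proof beyond remarking that the lemma ``is a rephrase of the equation~$(2.4)$ in \cite{Howe1989Tran} and one can check it case by case according to the classification of real reductive dual pairs,'' which is exactly the strategy you outline. Your added uniform argument for the reverse inclusions via $\fgg=\Lie(G)\oplus i\,\Lie(G)$ is a nice touch that the paper omits, but the substance is the same.
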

Note that the oscillator representation $\omega$ of $\sp(W)$ acts on
the Fock model $\sY$ factor through $\omega_\bC$. By the
commutative diagram \eqref{eq:models},
$\XiGHp(x)$ in \Cref{lem:ugk} could be made independent of real forms
via $\omega_\bC$.
\begin{equation}\label{eq:models}
  \vcenter{
    \xymatrix{
      \fgg\oplus \fgg' \ar@{^(->}[r] & \sp(\Wc) \ar[r]^{\omega_\bC}\ar[dr] &
      \Omega(\Wc)\ar[d]^{\text{left multiplication}} \\
      \Lie(G)\oplus \Lie(G')\ar@{^(->}[u]\ar@{^(->}[r]&\sp(W) \ar@{^(->}[u]\ar[r]^{\omega}& \End_\bC(\sY)
    }
  }
\end{equation}

\section{A theorem of Helgason and its consequences}
The following result of Helgason~\cite{Helgason1964Fund} is crucial for
us.
\begin{thm}\label{thm:scalerk}
  Let $\rG$ be a real reductive group such that all simple factors of
  $\fgg$ are classical Lie algebras.  Let $\rH$ be a symmetric
  subgroup of $\rG$: We suppose that there is an involution $\sigma$
  on $\Lie(\rG)$ such that $\rH$ is a subgroup of $\rG$ with Lie
  algebra $\Lie(\rG)^\sigma$ and meet all the connected components of
  $\rG$.  Let $\cZ(\fgg) = \cU(\fgg)^{\rG}$ be the subalgebra of
  $\rG$-invariants in $\cU(\fgg)$.  For a one-dimensional
  representation $\rho$ of $\rH$, let $\Ann_{\cU(\fhh)}(\rho)$ be the annihilator ideal of $\rho$ in
  $\cU(\fhh)$.  Then the natural map
  \[
  \xymatrix{ \cZ(\fgg) \ar[r]&
    \cU(\fgg)^\rH/(\Ann_{\cU(\fhh)}(\rho)\cU(\fgg)\cap\cU(\fgg)^\rH) }
  \]
  is surjective. \qed
\end{thm}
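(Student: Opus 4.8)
The plan is to reduce Theorem~\ref{thm:scalerk} to a statement purely about the pair $(\fgg,\fhh)$ and the one-dimensional module $\rho$, and then to invoke Helgason's classification of the ``K-types'' occurring in the spherical principal series (equivalently, the structure of $\cU(\fgg)^\rH$ as a module over $\cZ(\fgg)$). First I would observe that since $\rH$ meets every connected component of $\rG$, we have $\cU(\fgg)^\rH = \cU(\fgg)^{\rH^\circ}$, and likewise that invariance under $\rH$ is equivalent to invariance under its Lie algebra $\fhh = \Lie(\rG)^\sigma$; so the whole statement depends only on the infinitesimal symmetric pair $(\fgg,\fhh)$ together with the character $d\rho$ of $\fhh$. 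This is exactly the setting in which Helgason's work applies.

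Next I would unwind the target of the map. Write $\fgg = \fhh \oplus \fqq$ for the $\sigma$-eigenspace decomposition, so that by PBW $\cU(\fgg) = \cU(\fhh) \oplus \fqq\,\cU(\fgg)\cdot\text{(lower order)}$, more precisely $\cU(\fgg) = \cU(\fhh) \oplus \mathcal{J}$ where $\mathcal{J}$ is the left ideal... — rather, the cleaner route is: the quotient $\cU(\fgg)^\rH / (\Ann_{\cU(\fhh)}(\rho)\,\cU(\fgg) \cap \cU(\fgg)^\rH)$ is naturally identified with the algebra of $\rH$-invariant differential operators on the line bundle over $G/H$ attached to $\rho$ acting on its own ``$\rho$-section'', i.e. with $\mathbb{D}_\rho(G/H)$ in Helgason's notation. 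The assertion that $\cZ(\fgg)$ surjects onto this quotient is precisely the statement that every invariant differential operator on the $\rho$-line bundle over the symmetric space $G/H$ is, modulo the annihilator, induced from a bi-invariant operator on $G$ — which for Riemannian (or more generally reductive) symmetric spaces of classical type is Helgason's theorem on the polynomial ring $\mathbb{D}(G/H)$ being generated by the image of $\cZ(\fgg)$, extended to the line-bundle case. So the main work is to match the algebraic quotient appearing in the statement with Helgason's geometric object, via the Harish-Chandra-type isomorphism $\cU(\fgg)^\rH/(\cdots) \cong S(\faa)^{W}$ or its $\rho$-twisted analogue, where $\faa$ is a maximal $\sigma$-split abelian subalgebra and $W$ the little Weyl group.

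Concretely, the key steps in order: (1) reduce to $\rH^\circ$ and to the infinitesimal pair, as above; (2) identify $\cU(\fgg)^\rH/(\Ann_{\cU(\fhh)}(\rho)\cU(\fgg)\cap\cU(\fgg)^\rH)$ with the $\rho$-spherical Harish-Chandra homomorphism image, typically a subring of $S(\faa)$ cut out by a Weyl-group-type invariance — here one uses the Iwasawa-type decomposition $\fgg = \fhh \oplus \faa \oplus \fnn$ adapted to $\sigma$; (3) identify the image of $\cZ(\fgg) = \cU(\fgg)^\rG$ under the same homomorphism with the full ring of $W$-invariants $S(\faa)^W$ via the classical Harish-Chandra isomorphism; (4) invoke Helgason's result that for the classical symmetric pairs these two rings coincide (this is where the hypothesis that all simple factors of $\fgg$ are classical enters — Helgason verified surjectivity case-by-case and it can fail, or was at least open, for certain exceptional pairs). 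Steps (1) and (3) are routine; step (2) is the technical heart of the bookkeeping; step (4) is the substantive input, cited from \cite{Helgason1964Fund}.

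The main obstacle I expect is step (2): setting up the correct $\rho$-twisted Harish-Chandra homomorphism and proving it is injective on the quotient with image an explicit invariant ring, so that the surjectivity of $\cZ(\fgg) \to \cU(\fgg)^\rH/(\cdots)$ becomes equivalent to an equality of two concrete subrings of $S(\faa)$. One has to be careful that $\Ann_{\cU(\fhh)}(\rho)\cU(\fgg) \cap \cU(\fgg)^\rH$ is exactly the kernel of evaluation-on-$\rho$-sections and not something larger; this uses that $\rho$ is one-dimensional (so $\Ann_{\cU(\fhh)}(\rho)$ is a maximal ideal of finite codimension in $\cU(\fhh)$) together with a PBW/filtration argument showing the quotient has the expected size. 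Once the two subrings of $S(\faa)$ are on the table, the surjectivity is exactly Helgason's theorem for classical groups, so no further argument is needed.
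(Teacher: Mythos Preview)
Your outline is workable but diverges from the paper's route and leaves a gap in the reduction. The paper does not set up a $\rho$-twisted Harish-Chandra homomorphism at all: it cites Shimura \cite{Shimura1990}, who already proved the surjectivity when $\rH$ is a \emph{maximal compact} subgroup and $\rho$ is an arbitrary character (extending Helgason's $\rho$-trivial case), and then invokes Weyl's unitary trick in Lepowsky's form \cite{Lepowsky76} to transfer the result from the Riemannian situation $\rH=\rK$ to a general symmetric subgroup. The point of the unitary trick is that the complexified pair $(\fgg,\fhh)$ admits a real form in which $\fhh$ integrates to a compact group, and both sides of the surjectivity statement are invariant under change of real form. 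Your step~(1) is halfway to this --- reducing to the infinitesimal pair is exactly what makes the unitary trick available --- but you then proceed in steps~(2)--(4) as though working with the original, possibly non-compact, $\rH$, setting up an Iwasawa decomposition ``adapted to $\sigma$''. Helgason's result in \cite{Helgason1964Fund} is proved for Riemannian symmetric spaces only; invoking it for a general reductive symmetric pair requires precisely the change-of-real-form step you do not make explicit. If you insert that step after~(1), your~(2)--(4) collapse into ``cite Shimura'', which is the paper's argument.

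One further slip: the equality $\cU(\fgg)^\rH = \cU(\fgg)^{\rH^\circ}$ does not follow from the hypothesis that $\rH$ meets every connected component of $\rG$; that hypothesis constrains the component group of $\rG$, not of $\rH$. What it actually yields is $\rG = \rH\cdot \rG^\circ$, hence $\cZ(\fgg) = \cU(\fgg)^{\rG^\circ}\cap \cU(\fgg)^\rH$. The passage to $\rH^\circ$ can still be made, but by an averaging argument over the finite group $\rH/\rH^\circ$ (using that $\Ann_{\cU(\fhh)}(\rho)\,\cU(\fgg)$ is $\rH$-stable) after proving surjectivity for $\rH^\circ$, not by the equality you assert.
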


Our $\cZ(\fgg)$ can be smaller than the center of
$\cU(\fgg)$. For example, consider $\cU(\fso(2n))^{\rO(2n)}$. This is exactly
the situation we will encounter in local theta
correspondence. \Cref{thm:scalerk} may not hold if $\fgg$ has some
exceptional simple factors~\cite{Helgason1992}.

The above theorem is a small alteration of Helgason's original
version.  Helgason~\cite{Helgason1964Fund} treats the case that $\rH$
is a maximal compact subgroup of $\rG$ with $\rho$ trivial. Later
Shimura extends it to non-trivial $\rho$ 
(c.f. \cite[Theorem~2.4]{Shimura1990}).

Shimura's version and a Weyl's ``Unitary Trick'' (see for example
\cite{Lepowsky76}) implies above theorem. We omit the routine proof,
see \cite[Section~3.A]{MaT} for details.

\subsection{}
Following the argument in~\cite{Zhu2003},
we combine \Cref{thm:scalerk} and the see-saw pair
argument (\Cref{lem:seesaw}).  Let $(G,G')$ and $(H,H')$ form a see-saw
pair such that $H$ is a symmetric subgroup of $G$. By the
classification of real reductive dual pairs, $G'$ is automatically a
symmetric subgroup of~$H'$ and satisfies the condition in
\Cref{thm:scalerk}.

\begin{lemma}\label{thm:ugkcor}
  Let $\rho \in \cR(\fgg',\wtKGp;\sY)$ be a character of $\fgg'$ and
  let $\tau \in \cR(\fhh, \wtKH;\sY)$.  Then $\cU(\fgg)^{\wtH}$ acts
  on $\Hom_{\fhh,\wtKH}(\Theta(\rho), V_\tau)$ via a character.  This
  character is determined by the character $\chi_\tau$ of $\cZ(\fhh)$
  acting on $\tau$ and the annihilator ideal
  $\Ann_{\cU(\fgg')}(\rho)$.

  More precisely, there exists a map 
  \[\xi\colon \cU(\fgg)^{\wtH} =
  \cU(\fgg)^{H_\bC} \longrightarrow \cZ(\fhh) \quad
  \text{such that}\quad  T\circ \omega(x) = \chi_\tau(\xi(x)) T
\]
for $T\in \Hom_{\fhh, \wtKH}(\Theta(\rho),V_\tau)$. Moreover, $\xi$
could be defined only depending on $\Ann_{\cU(\fgg')}(\rho)$, but
independent of real forms and $\omega$.
\end{lemma}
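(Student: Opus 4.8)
The plan is to transport the question to the $\wtG'$-side of the see-saw via \Cref{lem:seesaw}, where the Helgason--Shimura theorem (\Cref{thm:scalerk}) applies to the symmetric pair $G'\subset H'$, and then to pin down the surviving ``central'' contribution through the correspondence of infinitesimal characters for the dual pair $(H,H')$. Concretely: by \Cref{lem:seesaw} the spaces $\Hom_{\fhh,\wtKH}(\Theta(\rho),V_\tau)$ and $\Hom_{\fgg',\wtKGp}(\Theta(\tau),V_\rho)$ are isomorphic, both lying inside $\Hom_{(\fhh,\wtKH)\times(\fgg',\wtKGp)}(\sY,V_\tau\otimes V_\rho)$, the first isomorphism being $\cU(\fgg)^{\wtH}$-equivariant and the second $\cU(\fhh')^{\wtG'}$-equivariant, both module structures being pre-composition with the image of $\omega$ on $\sY$. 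By \Cref{lem:ugk} one has $\cU(\fgg)^{\wtH}=\cU(\fgg)^{H_\bC}$, $\cU(\fhh')^{\wtG'}=\cU(\fhh')^{G'_\bC}$, and a map $\XiGHp\colon\cU(\fgg)^{H_\bC}\to\cU(\fhh')^{G'_\bC}$, depending only on the complex dual pair, with $\omega(x)=\omega(\XiGHp(x))$ on $\sY$. Hence the $\cU(\fgg)^{\wtH}$-action on the left matches, through $\XiGHp$, the $\cU(\fhh')^{G'_\bC}$-action on $\Hom_{\fgg',\wtKGp}(\Theta(\tau),V_\rho)$, and it suffices to analyze the latter.

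For that, by the classification of real reductive dual pairs $G'$ is a symmetric subgroup of $H'$ meeting every connected component, and $\fhh'$ has only classical simple factors, so \Cref{thm:scalerk} applies to $(H',G',\rho)$: the natural map $\cZ(\fhh')\to\cU(\fhh')^{G'_\bC}/\bigl(\Ann_{\cU(\fgg')}(\rho)\cU(\fhh')\cap\cU(\fhh')^{G'_\bC}\bigr)$ is surjective. Fix a lift $s\colon\cU(\fhh')^{G'_\bC}\to\cZ(\fhh')$ with $y-s(y)\in\Ann_{\cU(\fgg')}(\rho)\cU(\fhh')$, chosen using only the ideal $\Ann_{\cU(\fgg')}(\rho)$, and write $z':=s(y)$, $a:=y-z'$. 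For $T\in\Hom_{\fgg',\wtKGp}(\Theta(\tau),V_\rho)$, $\fgg'$-equivariance gives $T\circ\omega(bv)=\rho(b)\,T\circ\omega(v)=0$ whenever $b\in\Ann_{\cU(\fgg')}(\rho)\subset\cU(\fgg')$ and $v\in\cU(\fhh')$, hence $T\circ\omega(a)=0$ and $T\circ\omega(y)=T\circ\omega(z')$.

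Finally, for the central contribution apply \Cref{lem:ugk} to the pair $(G,G'):=(H,H')$ (the correspondence of infinitesimal characters): $\omega_\bC(\cZ(\fhh'))=\omega_\bC(\cZ(\fhh))$, so choose $z\in\cZ(\fhh)$ with $\omega_\bC(z)=\omega_\bC(z')$ and set $\XiHpH(z'):=z$, depending only on the complex dual pair. On the quotient $\Omega_{\sY,V_\tau}\cong V_\tau\otimes\Theta(\tau)$ of $\sY$ the commuting algebras $\cU(\fhh)$ and $\cU(\fhh')$ act on the two tensor factors (this is \eqref{eq:tensor} together with Howe's identification $\omega(\cU(\fsp)^{\wtH})=\omega(\cU(\fhh'))$), so $z\in\cZ(\fhh)$ acts by $\chi_\tau(z)$ on $V_\tau$ and trivially on $\Theta(\tau)$, whereas $z'\in\cZ(\fhh')$ acts only on $\Theta(\tau)$; since $\omega(z)=\omega(z')$ there and $\Theta(\tau)\neq0$ this forces $\omega(z')=\chi_\tau(z)\,\id$ on $\Theta(\tau)$ (which also shows $\chi_\tau(z)$ does not depend on the chosen $z$), hence $T\circ\omega(y)=\chi_\tau(z)\,T$. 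Setting $\xi:=\XiHpH\circ s\circ\XiGHp\colon\cU(\fgg)^{H_\bC}\to\cZ(\fhh)$ and unwinding the transport isomorphism gives $T\circ\omega(x)=\chi_\tau(\xi(x))\,T$ for all $x\in\cU(\fgg)^{\wtH}$ and all $T$, so $\cU(\fgg)^{\wtH}$ acts by the character $\chi_\tau\circ\xi$, determined by $\chi_\tau$ and $\Ann_{\cU(\fgg')}(\rho)$; and since $\XiGHp$, $s$ and $\XiHpH$ are each built only from the complex dual pair and the ideal $\Ann_{\cU(\fgg')}(\rho)$, realized through $\omega_\bC$ as in \Cref{sec:Fock}, the map $\xi$ is independent of the real forms and of $\omega$. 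The part I expect to need the most care is the compatibility bookkeeping: that pre-composition by $x$ on the left of the see-saw isomorphism really corresponds to pre-composition by $\XiGHp(x)$ on the right, and that $\cZ(\fhh')$ acts by an honest scalar on the generally reducible module $\Theta(\tau)$, equal to $\chi_\tau$ of a $\XiHpH$-preimage; granting these, the rest is a direct assembly of \Cref{lem:seesaw}, \Cref{lem:ugk} and \Cref{thm:scalerk}.
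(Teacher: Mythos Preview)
Your proposal is correct and follows essentially the same approach as the paper: transfer $x\in\cU(\fgg)^{\wtH}$ to $\cU(\fhh')^{\wtG'}$ via $\XiGHp$, reduce modulo $\Ann_{\cU(\fgg')}(\rho)\cU(\fhh')$ to $\cZ(\fhh')$ via \Cref{thm:scalerk}, and then transfer back to $\cZ(\fhh)$ via $\XiHpH$. The only cosmetic difference is that the paper works directly in the middle space $\Hom_{(\fhh,\wtKH)\times(\fgg',\wtKGp)}(\sY,V_\tau\otimes V_\rho)$ and uses the two equivariances in a single chain of equalities ($T\circ\omega(a'u')=\rho(a')\circ T\circ\omega(u')=0$ and $T\circ\omega(z)=\tau(z)\circ T$), whereas you pass to $\Hom_{\fgg',\wtKGp}(\Theta(\tau),V_\rho)$ and argue via the tensor decomposition $\Omega_{\sY,V_\tau}\cong V_\tau\otimes\Theta(\tau)$; this makes the ``compatibility bookkeeping'' you flag unnecessary, but your route is equally valid.
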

\begin{proof} By Lemma~\ref{lem:ugk}, for $x \in \cug^{\wtH}$, let $x' :=
  \XiGHp(x) \in \cU(\fhh')^{\wtG'}$. We have $\omega(x) = \omega(x')$ by
  the definition of $\XiGHp$.  Choose $z' \in \czz(\fhh')$ such that
  $x'-z'=a'u'$ with $a'\in\Ann_{\cU(\fgg')}(\rho)$ and $u'\in
  \cU(\fhh')$ by Lemma~\ref{thm:scalerk}.  Let $z := \XiHpH(z')\in
  \czz(\fhh)$. Therefore $\omega(z) = \omega(z')$ by
  Lemma~\ref{lem:ugk} again.

  For any $0\neq T\in \Hom_{(\fhh,\wtKH)\times(\fgg',\wtKGp)}(\sY,
  V_\tau \otimes V_\rho)$,
  \[
  \begin{split}
    T\circ\omega(x) =& T\circ\omega(x') = T\circ \omega(x'-z') +T\circ
    \omega(z') \\
    =& T \circ \omega(a'u') + T\circ \omega(z)
    = \rho(a') \circ T \circ \omega(u') + \tau(z) \circ T\\
    =& \chi_{\tau}(z) T.
  \end{split}
  \]
  Define $\xi(x) := z$.  Now the Lemma follows from the
  $\cU(\fgg)^{\wtH}$-module isomorphism
  \eqref{eq:isohomsp}:
  \[ \Hom_{\fhh,\wtKH}(\Theta(\rho), V_\tau) \cong
   \Hom_{(\fhh,\wtKH)\times(\fgg',\wtKGp)}(\sY, V_\tau \otimes
   V_\rho)\]
  
  Since $\XiGHp$,
  $\XiHpH$ and $z'$ could be defined independent of real forms and
  $\omega$, so dose $\xi$. This finish the proof.
\end{proof}

\section{Proof of Theorem~\ref{thm:scaler}}\label{sec:trans}
\subsection{} We recall some basic facts from the
construction of derived functor modules (see for example,
\cite[Chapter 6]{Wallach1988}
). We retain the notation in~\Cref{ss:der} where $V\in
\sC(\fgg,\rM)$.

As $\rK$-modules,
\[
\left.\rR^j(\Gamma_{\fgg,\rM}^{\fgg,\rK}) V\right|_{\rK} =
\rR^j(\Gamma_{\fkk,\rM}^{\fkk,\rK}) V.
\]
Hence we also denote $\rR^j(\Gamma_{\fkk,\rM}^{\fkk,\rK})$ by $\Gamma^j$.

For each $x\in \cU(\fgg)^\rK$, it gives a $(\fkk,\rM)$-map $x\colon
V\to V$.  Then $x$ acts on $\Gamma^jV$ by $\Gamma^j x\colon \Gamma^j
V\to \Gamma^j V$. Hence we have following lemma.
\begin{lemma}\label{lem:derugkact}
  Let $V\in \sC(\fgg,\rM)$ and $U\in \sC(\fkk,\rM)$.  Let $T\in
  \Hom_{\fkk,\rM}(V,U)$.  Suppose that $T\circ x = \chi(x) T$ for a
  charactor $\chi\colon \cU(\fgg)^{\rK} \to \bC$.  Then
  \begin{enumerate}[(i)]
  \item $\cU(\fgg)^{\rK}$ acts on $\Gamma^jT\in
    \Hom_{\rK}(\Gamma^jV,\Gamma^jU)$ via that character $\chi$;
  \item in particular, if $\cZ(\fgg)$ acts on $V$ via a character, then
    $\cZ(\fgg)$ acts on $\Gamma^j V$ via the same character.\qed
  \end{enumerate}
\end{lemma}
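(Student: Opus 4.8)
The plan is to reduce the whole statement to two ingredients: the functoriality of the derived Zuckerman functor $\Gamma^j=\rR^j\Gamma^{\fkk,\rK}_{\fkk,\rM}$, and the identification --- recalled in the paragraph immediately preceding the lemma --- of the action of $x\in\cU(\fgg)^{\rK}$ on the derived functor module $\Gamma^jV$ with the morphism $\Gamma^j\ell_x$ obtained by applying $\Gamma^j$ to the $(\fkk,\rM)$-endomorphism $\ell_x\colon V\to V$ given by left multiplication by $x$. (That $\ell_x$ really is a $(\fkk,\rM)$-map is because $\Ad(\rK)$-invariance of $x$ forces $[\fkk,x]=0$ in $\cU(\fgg)$, and $x$ is $\rM$-invariant; this too is noted before the lemma.)

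First I would record the elementary facts needed about $\Gamma^j$: being a derived functor it is additive, hence $\bC$-linear on each morphism space and compatible with composition and identity morphisms. Since $U$, and hence $\Gamma^jU$, carries only an action of $\fkk$ and not of $\fgg$, the right $\cU(\fgg)^{\rK}$-module structure on $\Hom_{\rK}(\Gamma^jV,\Gamma^jU)$ is by precomposition alone: $S\cdot x=S\circ\Gamma^j\ell_x$. For part (i) I would then simply apply $\Gamma^j$ to the hypothesis $T\circ\ell_x=\chi(x)\,T$, an equality of $(\fkk,\rM)$-maps $V\to U$; compatibility with composition and $\bC$-linearity give $\Gamma^j(T)\circ\Gamma^j\ell_x=\chi(x)\,\Gamma^j(T)$, i.e. $\Gamma^jT\cdot x=\chi(x)\,\Gamma^jT$ for every $x\in\cU(\fgg)^{\rK}$, which is exactly the claim. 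For part (ii) I would specialize: $\cZ(\fgg)=\cU(\fgg)^{\rG}\subseteq\cU(\fgg)^{\rK}$, and if $\cZ(\fgg)$ acts on $V$ through a character $\chi_0$ then $\ell_z=\chi_0(z)\,\id_V$ for $z\in\cZ(\fgg)$, whence $\Gamma^j\ell_z=\chi_0(z)\,\id_{\Gamma^jV}$, i.e. $z$ acts on $\Gamma^jV$ by the scalar $\chi_0(z)$. (This is also the special case $U=V$, $T=\id_V$ of the argument for (i), which applies verbatim to any subalgebra of $\cU(\fgg)^{\rK}$.)

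I do not anticipate a genuine obstacle. The only step that is not purely formal is the identification of the $x$-action on $\Gamma^jV$ with $\Gamma^j\ell_x$; but this is a property of the construction of derived functor modules (see e.g. Wallach, Ch.~6) and is precisely what is recalled right before the lemma, so it may be invoked directly. Granting it, both parts follow immediately from functoriality and additivity of $\Gamma^j$, with no computation involved.
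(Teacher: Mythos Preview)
Your proposal is correct and follows exactly the paper's approach: the paper states the lemma with a \qed\ and no separate proof, treating it as an immediate consequence of the preceding sentence that $x\in\cU(\fgg)^{\rK}$ acts on $\Gamma^jV$ via $\Gamma^jx$. Your explicit unpacking of functoriality and $\bC$-linearity is precisely what the paper leaves implicit.
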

  
Suppose $V$ is $\cZ$-finite where $\cZ$ is the center of $\cU(\fgg)$.
For example $V:=\Theta_1(\rho_1)$ in the next section.  By
\Cref{lem:derugkact} (ii), $\Gamma^jV$ is also $\cZ$-finite. Hence,
every finite dimensional $\rK$-invariant subspace in $\Gamma^jV$ will
generate an admissible $(\fgg,\rK)$-module
(c.f.~\cite{WallachZhu2004}).  It is also clear that
$\Ann_{\cU(\fgg)}(\Gamma^jV) \supset \Ann_{\cU(\fgg)}(V)$. So an
irreducible subquotient of $\Gamma^jV$ has Gelfand-Kirillov dimension
less than or equal to that of $V$.

\subsection{}
We retain the notation in \Cref{sec:thm}.
\begin{proof}[Proof of Theorem~\ref{thm:scaler}]
  We can fix a complex dual pair $(G_\bC,G'_\bC)$ sitting in
  $\Sp(W_\bC)$ such that $G_i \subset G_\bC$ and $G'_i \subset G'_\bC$
  with commuting Cartan involutions\footnote{These Cartan involutions
    of $G_i$ and $G'_i$ can be obtained by restricting a pair of
    commutating Cartan involutions of $\Sp(W_\bC)$}.

  Let $\fhh := \fkk_2$ and $H$ be the maximal subgroup in $G_1$ with
  Lie algebra $\fhh\cap \Lie(G_1)$. Now $H$ is the member of a dual
  pair $(H,H')$ and $M := H \cap K_2 = K_1\cap K_2$ is a maximal compact
  subgroup of $H$ by the classification of real reducitive dual pairs.

  Let $V_{\tau_1}$ be a $(\fkk_2,\wtM)$-module of type $\tau_1$. Let
  \[0\neq T \in \Hom_{\fkk_2,\wtM}(\Theta_1(\rho_1),V_{\tau_1})
  \] be the map in assumption \eqref{thmA.c}.  Since $\tau_2$ occur in
  the image of $\Gamma^jT$, we fix an irreducible $\wtK_2$-submodule
  $U_{\tau_2}$ of type $\tau_2$ in $\Gamma^j\Theta(\rho_1)$ such that
  $(\Gamma^jT)(U_{\tau_2}) \neq 0$. Let
  \[
  \gU := \cU(\fgg)U_{\tau_2} \subset \Gamma^j \Theta_1(\rho_1)
  \]
  be the admissible $(\fgg,\wtK_2)$-submodule of $\Gamma^jV$ generated
  by $U_{\tau_2}$.  Let $\gV$ be the image of $U_{\tau_2}$ under
  $\Gamma^jT$.  We have a right $\cU(\fgg)^{\wtK_2}$-module homomorphism
  \[
  \xymatrix{
    \Hom_{\wtK_2}(\Gamma^j\Theta_1(\rho_1),\Gamma^jV_{\tau_1})\ar[r]&
    \Hom_{\wtK_2}(\gU,\gV)}
  \]
  by pre-composite the restriction and post-composite the
  projection.  Denote by $\gT$ the image of $\Gamma^jT$ under above
  homomorphism.

  Notice that, as subalgebras in $\cU(\sp)$, $ \cU(\fgg)^{\wtH} =
  \cU(\fgg)^H = \cU(\fgg)^{K_2} = \cU(\fgg)^{\wtK_2}$ and $\cZ(\fhh) =
  \cU(\fkk_2)^{\wtH} = \cU(\fkk_2)^H=\cU(\fkk_2)^{K_2} = \cZ(\fkk_2)$.


  Since $V_{\tau_1}$ is an irreducible $(\fkk_2,\wtM)$-module and
  $\gV$ is an irreducible $\wtK_2$-submodule, $\cZ(\fkk_2)$ act on
  them by characters.  On the other hand, $\sV$ is a submodule in
  $\Gamma^jV_{\tau_1}$, we conclude that $\cZ(\fkk_2)$ acts on
  $V_{\tau_2}$, $\gV$ and $\Gamma^jV_{\tau_1}$ by the same character
  (c.f. Lemma~\ref{lem:derugkact}~(ii)). 

  The
  assumption~\eqref{thmA.a} implies $\Ann_{\cU(\fgg')}\rho_1 =
  \Ann_{\cU(\fgg')}\rho_2$. Now $\cU(\fgg)^{\wtH}=\cU(\fgg)^{\wtK_2}$ act on
  $\Hom_{\fkk_2,\wtK_1}(\Theta_1(\rho_1),V_{\tau_1})$ and
  $\Hom_{\wtK_2}(\Theta_2(\rho_2), \gV)$ via the same character by
  \Cref{thm:ugkcor}. In particular, $\cU(\fgg)^{\wtK_2}$ acts on
  $\bC\gT$ via this character by \Cref{lem:derugkact}~(i).

  Hence $\gU$ has an irreducible quotient containing $U_{\tau_2}$ and
  it is isomorphic to the irreducible subquotient of
  $\Theta_2(\rho_2)$ containing $\tau_2$ by the discussion in
  \Cref{sec:Harish}.  This finished the proof.
\end{proof}

It is known to experts that $\Theta_2(\rho_2)$ is $\wtK_2$-multiplicity
free for any character $\rho_2$,
i.e. $\dim\Hom_{\wtK_2}(\Theta_2(\rho_2),\tau_2) \leq 1$ for every
$\wtK_2$-type $\tau_2$ (see for
example, \cite{ZhuHuang1997} or \cite[Section~2.3.6]{MaT}).  On the
other hand, the multiplicity of $\tau_2$ in $\Gamma^j\Theta_1(\rho_1)$
could be greater than one. For all examples in \Cref{sec:ex}, it is
the $\wtK_2$-isotypic component of several copies of a
$\wtK_2$-multiplicity free irreducible $(\fgg,\wtK_2)$\mbox{-}module.

\section{Examples}\label{sec:ex}
In this section, we give two types of examples. In these
examples, the full theta liftings are already irreducible. So we
replace $\Theta$ by $\theta$ when we apply \Cref{thm:scaler}.

\subsection{A decomposition of the derived functor module}
\label{sec:dec}
We retain notations in \Cref{ss:der}. 
Suppose the $(\fgg,\rM)$-module $V$ is a direct sum of irreducible
unitarizable $(\fkk, \rM)$-modules, i.e.
\begin{equation*}
V = \bigoplus_{l\in L}V_l,
\end{equation*}
where the Harish-Chandra pair $(\fkk,\rM)$ comes from some real
reductive group $\rH$ and $V_l$ are irreducible unitarizable $(\fkk,\rM)$-modules.  In
fact, all examples in the next sections are in this case and they are
typical examples of discretely decomposable modules in the sense of
Kobayashi~\cite{Kob}.

\smallskip

Following Wallach-Zhu~\cite{WallachZhu2004}, we have a decomposition of
the $(\fgg,\rK)$-module $\Gamma^j V$ by Vogan-Zuckerman's
theory~\cite{VoganZuckerman1984}:
\begin{equation}\label{eq:decW}
  \Gamma^j V = \bigoplus_{\cW} \Gamma_{\cW} V.
\end{equation}
We describe the decomposition briefly. See \cite{WallachZhu2004} or
\cite[Section~2.4.2]{MaT} for the details of the construction.  Here we
fix a decomposition of $\bigwedge^j \fkk/\fmm$ into irreducible
$\rM$-submodules and $\cW$ runs over all its irreducible components. For
each $V_l$ let $\hat{\rK}_l$ be the set of $\rK$-type $\gamma$ such
that $V_l$ and $\gamma$ have the same infinitesimal characters and
central characters.  Then the $(\fgg,\rK)$-module
\[
\Gamma_{\cW} V := \bigoplus_{l\in L} \Gamma_{\cW} V_l :=
\bigoplus_{l\in L}\Hom_{\rM}(W,
\bigoplus_{\gamma\in \hat{\rK}_l} V_j\otimes \gamma^*)\otimes \gamma\]
as $\rK$-module, where $\Gamma_{\cW} V_l$ are $\rK$-submodules of
$\Gamma^j V_l$.  Most of the terms $\Gamma_{\cW} V$ in \eqref{eq:decW}
are zero, one can
determine the non-zero terms by Vogan-Zuckerman's
theory~\cite{VoganZuckerman1984}.

\subsection{Transfer of unitary highest weight modules }
\label{sec:exampleWZ}
To have unitary highest weight modules, the pair $(\fgg,K)$ should be
Hermitian symmetric.  We will study three families of examples where
$\fgg$ has root system of type $A$, $C$ and $D$ respectively.

\begin{table}[htbp]
  \centering\small\setlength{\tabcolsep}{2pt}
  \begin{tabular}{c|cc|c|c|c} 
    Type & $G$           & $G'_{p,q}$  & $H$                       & stable range   & $j(p,q)$           \\
    \hline
    $A$  & $\rU(n,n)$    & $\rU(p,q)$  & $\rU(r,s)\times \rU(s,r)$ & $n\geq p+q$    & $rs-(r-p)(s-q)$    \\
    $C$  & $\Sp(2n,\bR)$ & $\rO(p,q)$  & $\rU(r,s)$                & $n\geq p+q$    & $2(rs-(r-p)(s-q))$ \\
    $D$  & $\rO^*(2n)$   & $\rSp(p,q)$ & $\rU(r,s)$                & $n\geq 2(p+q)$ & $rs-(r-2p)(s-2q)$ 
  \end{tabular}
  \caption{Transfer of unitary highest weight modules: (Here $r+s=n$.)}
  \label{tab:tran1}
\end{table}

See \Cref{tab:tran1} for
notation. We fix a real form $G \subset G_\bC$ with Cartan involution
$\sigma_1$. We set $G_1 := G$. Fix an involution $\sigma_2$
commuting with $\sigma_1$ such that $H = G_\bC^{\sigma_2}\cap
G_1$. Let $G_2$ be the real form of $G_\bC$ such that $\sigma_2$ is
its Cartan involution.  Let $K_i = G_i^{\sigma_i}$, $M = K_1\cap K_2$
and define $\Gamma^j$ as in \eqref{eq:gamma1}.

In this setting, we have $\wtG = \wtG_1\cong \wtG_2$.  So $\Gamma^j$ can be
thought as an operation which transfers representations of $\wtG$.

Let $\theta^{p,q}$ be the theta lifting map from $\wtG'_{p,q}$ to $\wtG$.
In the dual pair $(G,G'_{p,q})$, the double covering $\wtG'_{p,q}$ is
split: $\wtG'_{p,q}\cong \bZ/2\bZ \times G'_{p,q}$. So there is a
canonical genuine character $\varsigma$ of $\wtG'_{p,q}$ whose
restriction on $G'_{p,q}$ part is trivial.  By twisting with
$\varsigma$, we identify $G'_{p,q}$-module with
$\wtG'_{p,q}$-module. We abuse notation and denote
$\theta^{p,q}(\rho\otimes \varsigma)$ by $\theta^{p,q}(\rho)$.

We consider characters of $G'_{p,q}$ whose restriction on its Lie
algebra is trivial.  In type $A$ and type $D$, trivial representation
is the only one.  In type $C$, $G'_{p,q} = \rO(p,q)$. Let
$\bfone^{\xi,\eta}$ be the character of $\rO(p,q)$ such that
$\bfone^{\xi,\eta}|_{\rO(p)\times \rO(q)} = \det_{\rO(p)}^{\xi}\otimes
\det_{\rO(q)}^{\eta}$. When $p,q \neq 0$, i.e. $\rO(p,q)$ is
non-compact, all of the four characters of $\rO(p,q)$ are represented
by $\bfone^{\xi,\eta}$ with $\xi,\eta \in \bZ/2\bZ$. When one of $p,q$
is zero, i.e.  $\rO(p,q)$ is compact, there are two characters: the
trivial representation and the determinant.

\begin{thm}\label{thm:e1}
  Fix positive integers $n,m,r,s$ such that $r+s=n$.  Let $(G,
  G'_{m,0})$ be in the stable range with $G'_{m,0}$ the smaller
  member. For type $A$ and type $D$, let $\rho$ and $\rho_{p,q}$ be
  trivial representations; for type $C$, let $\rho = \det_{\rO(m)}^\epsilon$ and
  $\rho_{p,q} = \bfone^{\xi,\eta}$ where $\xi \equiv \epsilon -(s-q) \pmod
  2$ and $\eta \equiv \epsilon -(r-p) \pmod 2$.  Then
  \begin{equation}\label{eq:e1}
    \Gamma^j\theta^{m,0}(\rho) = \bigoplus_{\substack{j = j(p,q)\\
        p+q=m}} \theta^{p,q}(\rho_{p,q}),
  \end{equation}
  Here $0\leq p\leq r, 0\leq q\leq s$ in type~$A$ and type $C$;
  $0\leq 2p\leq r, 0\leq 2q\leq s$ in type~$D$.

\end{thm}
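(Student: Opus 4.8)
\textbf{Proof proposal for Theorem~\ref{thm:e1}.}

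The plan is to verify the three hypotheses (a)--(c) of Theorem~\ref{thm:scaler} for each pair $(G'_{m,0}, G'_{p,q})$ appearing on the right-hand side of \eqref{eq:e1}, and then to assemble the resulting subquotient identifications into the direct-sum decomposition, using the $\wtK_2$-multiplicity structure explained after the proof of Theorem~\ref{thm:scaler}. First I would set up the see-saw pair: with $\fhh = \fkk_2$ the symmetric subalgebra determined by $\sigma_2$, the subgroup $H$ of $G_1$ is $\rU(r,s)$ (type $A$: $\rU(r,s)\times\rU(s,r)$) and its dual partner $H'$ is, again by the classification of real reductive dual pairs, a group whose various real forms contain all the $G'_{p,q}$ with $p+q=m$, $0\le p\le r$, $0\le q\le s$ (resp. the type-$D$ constraint). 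Hypothesis~\eqref{thmA.a} is immediate here because $\rho = \theta^{m,0}$-datum and $\rho_{p,q}$ are chosen with the same (trivial, in types $A,D$) restriction to the Lie algebra $\fgg'$; in type $C$ the congruences defining $\xi,\eta$ are exactly arranged so that $\bfone^{\xi,\eta}$ and $\det_{\rO(m)}^{\epsilon}$ agree on $\fgg' = \fso(p,q)_\bC = \fso(m)_\bC$ (both restrict to the trivial character).

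Next, for hypothesis~\eqref{thmA.b} and the role of $\tau_1,\tau_2$: here $\Theta_1(\rho_1) = \theta^{m,0}(\rho)$ is the theta lift of a character of a compact group, hence a unitary highest weight module of $\wtG$, and restricted to $(\fkk_2,\wtM)$ it decomposes as in \Cref{sec:dec} into a sum of irreducible unitarizable $(\fkk_2,\wtM)$-modules; by the see-saw lemma (\Cref{lem:seesaw}) these summands are precisely the theta lifts $\Theta_H(\rho_{p,q})$ from the pairs $(H, H'_{p,q})$ --- this is Kudla's see-saw computation applied to characters of the two compact ``small'' members. So I take $\tau_1$ to be such a summand $\Theta_H(\rho_{p,q})$ (an irreducible $(\fkk_2,\wtM)$-module), and $\tau_2$ the $\wtK_2$-type sitting at the ``bottom'' of the derived functor module $\Gamma_{\cW}\tau_1$ for the appropriate $\cW \subset \bigwedge^j\fkk_2/\fmm$ with $j = j(p,q)$. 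That $\tau_2$ occurs in $\Theta_2(\rho_{p,q})$ is the $K$-type statement for theta lifts of characters of $\rSp(p,q)$, $\rO(p,q)$, $\rU(p,q)$, which is classically known (Kashiwara--Vergne, Howe, Enright--Howe--Wallach, and the references \cite{Enright,Enright1985,Frajria1991,Wallach1994Trans,WallachZhu2004} cited in \Cref{sec:thm}); one checks that the lowest $\wtK_2$-type of $\Theta_2(\rho_{p,q})$ is the same $\tau_2$ produced by Vogan--Zuckerman theory applied to the highest weight module $\tau_1$ under $\Gamma^{j(p,q)}$. Hypothesis~\eqref{thmA.c} is then the assertion that the nonzero map $T\in\Hom_{\fkk_2,\wtM}(\theta^{m,0}(\rho),\tau_1)$ --- namely the projection onto the chosen summand --- has $\Gamma^j T$ nonzero on a $\tau_2$-isotypic vector; this follows from the explicit, nonvanishing description of $\Gamma_{\cW}\tau_1$ in \cite{WallachZhu2004} / \Cref{sec:dec}, since $\Gamma^j$ is additive over the direct sum $\bigoplus_l V_l$ and the $\cW$-component of the summand $\tau_1$ is exactly the nonzero piece whose bottom layer is $\tau_2$.

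With (a)--(c) in place, \Cref{thm:scaler} gives, for each admissible $(p,q)$ with $j(p,q)=j$, an irreducible subquotient of $\Gamma^j\theta^{m,0}(\rho)$ carrying the $\wtK_2$-type $\tau_2 = \tau_2(p,q)$ and isomorphic to the irreducible subquotient of $\Theta_2(\rho_{p,q})$ with that $\wtK_2$-type; since in the stable range $\Theta_2(\rho_{p,q}) = \theta^{p,q}(\rho_{p,q})$ is already irreducible (this is the stable-range irreducibility, cf. the remark opening \Cref{sec:ex}), that subquotient is $\theta^{p,q}(\rho_{p,q})$ itself. To upgrade ``subquotient'' to the direct-sum equality \eqref{eq:e1} I would argue: (1) by \Cref{lem:derugkact}(ii) and the Gelfand--Kirillov dimension bound, $\Gamma^j\theta^{m,0}(\rho)$ is admissible and $\cZ$-finite with GK-dimension at most that of $\theta^{m,0}(\rho)$; (2) the decomposition \eqref{eq:decW} $\Gamma^j V = \bigoplus_{\cW}\Gamma_{\cW}V$, combined with the branching of $\bigwedge^j\fkk_2/\fmm$ and the explicit $\hat{\rK}_l$-description, shows $\Gamma^j\theta^{m,0}(\rho)$ is semisimple with the summands indexed exactly by the $(p,q)$ on the right of \eqref{eq:e1} (each occurring as a $\wtK_2$-isotypic component of finitely many copies of a multiplicity-free module); (3) matching lowest $\wtK_2$-types and central/infinitesimal characters identifies each such summand with $\theta^{p,q}(\rho_{p,q})$ via step~(2) of the previous paragraph, and the multiplicity-freeness of $\Theta_2(\rho_{p,q})$ forces the identification to be an isomorphism onto the whole summand.

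\textbf{Main obstacle.} The formal skeleton --- applying \Cref{thm:scaler} term by term --- is routine once the see-saw is set up; the real work, and the place I expect to spend effort, is step~(2) above: showing that $\Gamma^j\theta^{m,0}(\rho)$ is \emph{exactly} the direct sum on the right with no extra summands and no failure of semisimplicity. This requires the detailed Vogan--Zuckerman bottom-layer analysis of \cite{WallachZhu2004} for each of the types $A$, $C$, $D$ separately, a careful count of which $\cW\subset\bigwedge^j\fkk_2/\fmm$ contribute (hence the combinatorial formulas $j(p,q)$ and the ranges $0\le p\le r$, $0\le q\le s$ resp. $0\le 2p\le r$), and in type $C$ a bookkeeping of the $\det$-twists to pin down the precise characters $\bfone^{\xi,\eta}$ --- this is exactly where the congruences $\xi\equiv\epsilon-(s-q)$, $\eta\equiv\epsilon-(r-p)\pmod 2$ come from, and verifying them is the one genuinely computational ingredient.
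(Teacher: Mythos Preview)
Your proposal is correct and follows essentially the same route as the paper: decompose $\theta^{m,0}(\rho)$ as a direct sum of irreducible unitarizable $(\fkk_2,\wtM)$-modules via the compact see-saw $(H,H')$, apply the $\Gamma_{\cW}$-decomposition of \Cref{sec:dec} together with Vogan--Zuckerman theory to obtain the $\wtK_2$-module identity $\Gamma_{\cW_{p,q}}\theta^{m,0}(\rho)\cong\theta^{p,q}(\rho_{p,q})$ (and vanishing for all other $\cW$), then invoke \Cref{thm:scaler} with $G'_1=G'_{m,0}$, $G'_2=G'_{p,q}$ to upgrade this to a $(\fgg,\wtK_2)$-isomorphism; you have also correctly located the substantive computational step.

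One point to tidy: the irreducible $(\fkk_2,\wtM)$-summands of $\theta^{m,0}(\rho)$ are the lifts $L(\mu)$ indexed by irreducible $\wtH'$-modules $\mu$ with $\Hom_{\wtG'_{m,0}}(\mu,\rho)\neq 0$ (a multiplicity-free family since $G'_{m,0}$ is a symmetric subgroup of $H'$), \emph{not} by the pairs $(p,q)$ --- the $(p,q)$-labelling appears only after applying $\Gamma_{\cW_{p,q}}$, which gathers infinitely many $L(\mu)$ into a single $\theta^{p,q}(\rho_{p,q})$. So your $\tau_1$ should be one such $L(\mu)$, and the map $T$ the projection onto it; the role of Theorem~\ref{thm:scaler} is then to identify, $\wtK_2$-type by $\wtK_2$-type, the $(\fgg,\wtK_2)$-structure on each $\Gamma_{\cW_{p,q}}\theta^{m,0}(\rho)$ with that of $\theta^{p,q}(\rho_{p,q})$.
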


\subsection{}
The module $\theta^{m,0}(\rho)$ is a singular unitary highest weight
module.  All such modules are classified in \cite{Enright1983}
and they are obtained from theta lifting for classical groups. So
$\theta^{m,0}(\rho) \leadsto \Gamma^j\theta^{m,0}(\rho)$ is a
cohomological operation constructing $\wtG$-modules from these
relatively well understood modules. This type of construction is
studied extensively in \cite{Enright,
  Enright1985,Frajria1991,Wallach1994Trans,WallachZhu2004}.  
Frajria~\cite{Frajria1991}
studied $\Gamma^j\theta^{m,0}(\rho)$ at the first non-vanishing
degree. He expects these modules could be fit in the dual pair
correspondence.  Later, Wallach and Zhu made a precise
conjecture~\cite{WallachZhu2004}*{Conjecture~5.1} for type $C$ and
they show that \eqref{eq:e1} holds on $K$\mbox{-}spectrum level.  In these works,
the irreducibility and unitarizablity of the resulting modules is the
main concern.  Here the unitarizablity of $\Gamma^j\theta^{m,0}(\rho)$
could follow from Li's result~\cite{Li1989} on the unitarizablity of
stable range theta lifts by \Cref{thm:e1}. Moreover
$\Gamma^j\Theta^{m,0}(\rho)$ could be reducible when there are
(precisely) two pairs $(p_1,q_1)$ and $(p_2,q_2)$ such that $j =
j(p_1,q_1) = j(p_2,q_2)$.

If we consider the derived functor $\Gamma^j$ in all degrees together
, there is a simple formula:
\[
\bigoplus_{j\in \bN}\Gamma^j \theta^{m,0}(\rho) \cong
\bigoplus_{p+q=m}\theta^{p,q}(\rho_{p,q}).
\]
It would be nice if one could explain above formula in terms of some
Euler characteristic formula in the Grothendieck group by adding some
$\pm$-signs. However, we have no idea how to do it yet.

\begin{proof}[Sketch of the proof of \Cref{thm:e1}]
Let $H'$ be the centralizer of $H$ in the real symplectic group
containing the pair $(G,G'_{m,0})$. Now $(H,H')$ is a compact dual
pair, the Fock space $\sY$ is already decomposing into a direct sum of
irreducible unitarizable  $(\fhh,\wtM)$-modules.  So it is easy to
see that $\theta^{m,0}(\rho)$ has the same property
(see \cite{WallachZhu2004} or \cite[Lemma~49]{MaT}).

A see-saw pair argument gives
\begin{equation*}
  \theta^{m,0}(\rho)|_{\fhh,\wtM} = \bigoplus_{\mu\in \cR(\wtH';\sY)} n_\mu
  L(\mu).
\end{equation*}
Here $L(\mu)$ is the theta lift of the $\wtH'$-module $\mu$; $n_\mu
=\dim \Hom_{\wtG'}(\mu,\rho)$ is the multiplicity of $L(\mu)$ occur in
$\theta^{m,0}(\rho)$. In fact, the decomposition is multiplicity free,
since $G'_{m,0}$ is a symmetric subgroup of $H'$.


Note that $L(\mu)$ is irreducible and unitarizable. We apply the decomposition in \Cref{sec:dec}: as
$(\fgg,\wtK_2)$-module,
\begin{equation*}
  \Gamma^j\theta^{m,0}(\rho) = \bigoplus_{\cW} \Gamma_{\cW}
  \theta^{m,0}(\rho).
\end{equation*}

We apply
Vogan-Zuckerman's theory~\cite{VoganZuckerman1984} to calculate
$\Gamma^jL(\mu)$ case by case, see \cite{WallachZhu2004} and
\cite[Section~3.5.1]{MaT} for details.  The calculation shows that
there are $\wtM$-submodules $\cW_{p,q}$ with multiplicity one in
$\bigwedge^{j(p,q)}\fhh/\fmm$ such that as $\wtK_2$-module,
\begin{equation}\label{eq:e1.iso}
  \Gamma_{\cW_{p,q}}\theta^{m,0}(\rho) \cong
  \theta^{p,q}(\rho_{p,q}).
\end{equation}
Moreover, $\Gamma_{\cW}\theta^{m,0}(\rho) =0$ for other $\cW$.

Setting $G'_1 := G'_{m,0}$ and $G'_2 := G'_{p,q}$, we conclude that
\eqref{eq:e1.iso} also holds as $(\fgg,\wtK_2)$-module by
\Cref{thm:scaler}.  This completes the proof.
\end{proof}

\subsection{Transfer of singular unitary representations}

In this section we consider another type of examples in which singular
unitary representations are transfered between different real forms.
\begin{table}[htpb]
  \centering
  \begin{tabular}{c|cc|c|c}
    Type   & $G_{p,q}$  & $G'$           & stable range       & $j_0$        \\
    \hline
    $A$    & $\rU(p,q)$ & $\rU(n_1,n_2)$ & $p,q \geq n_1+n_2$ & $(n_1+n_2)r$ \\
    $C$    & $\Sp(p,q)$ & $\O^*(2n)$     & $p,q\geq n$        & $2nr$        \\
    $D$    & $\rO(p,q)$ & $\Sp(2n,\bR)$  & $\substack{p, q \geq 2n
      \text{ and}          \\\max\set{p,q}>
      2n}$ & $nr$
  \end{tabular}
  \caption{Transfer of singular unitary representations}\label{tab:exB}
\end{table}

See \Cref{tab:exB} for notation.  We fix a real form $G_1 \subset
G_\bC$ with Cartan involution $\sigma_1$ such that $G_1 \cong
G_{p,q}$.  Now fix an involution $\sigma_2$ commuting with $\sigma_1$
such that $H:= G_\bC^{\sigma_2}\cap G_1 \cong G_{p,r}\times
G_{0,q-r}$.  Let $G_2$ be the real form of $G_\bC$ such that
$\sigma_2$ is its Cartan involution.  It is clear that $G_2 \cong
G_{p+r,q-r}$.  We define functor $\Gamma^j$ by \eqref{eq:gamma1}.

In type $A$ and type $D$, we will assume $p+q$ is even. Then the
double covering $\wtG'$ is split, i.e. $\wtG' \cong G'\times \bZ/2\bZ$
with respect to the dual pair $(G_{p,q},G')$.  Fix the genuine
character $\varsigma$ of $\wtG'$ which is nontrivial on $\bZ/2\bZ$ and
trivial on $G'$. We again identify genuine $\wtG'$-modules with
$G'$-modules via twisting of $\varsigma$.  In particular, the trivial
$G'$-module $\bfone$ corresponds to $\varsigma$.  We denote by
$\theta_{p,q}$ the theta lifting map from $\wtG'$ to $\wtG_{p,q}$.

Note that $\theta_{p,q}(\bfone)$ is not a highest weight module except
for the pairs $(\rO(2,q),\Sp(2,\bR))$. The latter situation is
studied in \cite[Section~8]{Enright1985}.

\begin{thm}\label{thm:ex2}
  Fix positive integers $p,q,n,r$ ($n_1,n_2$ for type~$A$,) such that
  $p+q$ is even in type~$A$ and type $D$. We assume that
  $(G_{p,q},G')$ is in the stable range with $G'$ the smaller member
  and $r<q$.
  Let $\theta_{p,q}(\bfone)$ be the theta lift of the trivial
  representation of $G'$.
  \begin{enumerate}[(i)]
  \item\label{item:ex2.1} If $(G_{p+r,q-r},G')$ is outside the stable
    range, $\Gamma^j\theta_{p,q}(\bfone) = 0$ for every $j$.
  \item\label{item:ex2.2} If $(G_{p+r,q-r},G')$ is in the stable
    range,
    \[
    \Gamma^j\theta_{p,q}(\bfone) = \begin{cases}
      \theta_{p+r,q-r}(\bfone) & \text{when } j=j_0\\
      0 & \text{when }j< j_0,
    \end{cases}
    \]
    where $j_0$ is defined in \Cref{tab:exB}.  If $j>j_0$,
    $\Gamma^j\theta_{p,q}(\bfone)$ is a direct sum of several copies
    of $\theta_{p+r,q-r}(\bfone)$.
  \end{enumerate}
\end{thm}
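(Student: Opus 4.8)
We follow the pattern of the proof of \Cref{thm:e1}. Let $H'$ be the centralizer of $H$ in the real symplectic group containing the pair $(G_{p,q},G')$. Since $H\cong G_{p,r}\times G_{0,q-r}$, the symplectic space splits accordingly and $H'\cong H'_1\times H'_2$, where $(G_{p,r},H'_1)$ and $(G_{0,q-r},H'_2)$ are dual pairs and $G'$ sits in $H'$ diagonally as a symmetric subgroup. Thus $(G_{p,q},G')$ and $(H,H')$ form a see-saw pair with $H<G_{p,q}$ and $G'<H'$.

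Since $(G_{p,q},G')$ is in the stable range, $\theta_{p,q}(\bfone)$ is irreducible and unitarizable (Li~\cite{Li1989}), so $\theta_{p,q}(\bfone)|_{\fhh,\wtM}$ is a direct sum of irreducible unitarizable $(\fhh,\wtM)$-modules; the see-saw argument of \Cref{lem:seesaw} identifies these summands:
\begin{equation*}
\theta_{p,q}(\bfone)\big|_{\fhh,\wtM} \;=\; \bigoplus_{\mu\in\cR(\wtH';\sY)} n_\mu L(\mu),
\qquad n_\mu=\dim\Hom_{\wtG'}(\mu,\bfone),
\end{equation*}
where $L(\mu)$ is the theta lift of the $\wtH'$-module $\mu$ to $\wtH$. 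As $G'$ is symmetric in $H'$ and $\bfone$ is a character, $n_\mu\le 1$ and the decomposition is multiplicity free; writing $\mu=\mu_1\otimes\mu_2$ we get $L(\mu)=L_1(\mu_1)\otimes L_2(\mu_2)$, with $L_2(\mu_2)$ a finite-dimensional representation of the compact group $G_{0,q-r}$ and $L_1(\mu_1)$ an irreducible unitary lift for $(G_{p,r},H'_1)$. In particular $\theta_{p,q}(\bfone)|_{\fhh,\wtM}$ is discretely decomposable and the framework of \Cref{sec:dec} applies.

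By \eqref{eq:decW} we then have
\begin{equation*}
\Gamma^j\theta_{p,q}(\bfone) \;=\; \bigoplus_{\cW}\Gamma_{\cW}\theta_{p,q}(\bfone) \;=\; \bigoplus_{\cW}\bigoplus_{\mu}\Gamma_{\cW}L(\mu),
\end{equation*}
and it remains to compute the $\wtK_2$-modules $\Gamma_{\cW}L(\mu)$ by Vogan-Zuckerman's theory~\cite{VoganZuckerman1984}, case by case for types $A$, $C$ and $D$. When $(G_{p+r,q-r},G')$ lies outside the stable range, the numerics of the relevant $\theta$-stable data (infinitesimal character and $\wtK_2$-type compatibility) leave no room for a nonzero contribution, so $\Gamma_{\cW}L(\mu)=0$ for every $\cW$ and every $\mu$, which proves~(i). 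When $(G_{p+r,q-r},G')$ is in the stable range, the same computation exhibits a distinguished pair $(\cW_0,\mu_0)$ with $\cW_0\subset\bigwedge^{j_0}\fhh/\fmm$ and $\Gamma_{\cW_0}L(\mu_0)\cong\theta_{p+r,q-r}(\bfone)$ as $\wtK_2$-modules, shows $\Gamma_{\cW}L(\mu)=0$ whenever the degree is strictly below $j_0$, and produces several nonzero contributions in each degree above $j_0$, each isomorphic as a $\wtK_2$-module to $\theta_{p+r,q-r}(\bfone)$.

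Finally we upgrade these $\wtK_2$-identifications to $(\fgg,\wtK_2)$-module statements via \Cref{thm:scaler}, applied with $G'_1:=G'$, $G'_2:=G'$, $\rho_1=\rho_2:=\bfone$, $\tau_1:=L(\mu)$ (viewed as an irreducible $(\fkk_2,\wtM)$-module), $T$ the projection of $\theta_{p,q}(\bfone)$ onto $L(\mu)$, and $\tau_2$ a $\wtK_2$-type in the image of $\Gamma^jT$. Hypothesis~\eqref{thmA.a} holds because both characters restrict trivially to $\fgg'$, hypothesis~\eqref{thmA.b} holds because $\theta_{p+r,q-r}(\bfone)$, being the stable-range lift, occurs in $\Theta_2(\bfone)$, and hypothesis~\eqref{thmA.c} is exactly what the Vogan-Zuckerman computation provides. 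Since $\theta_{p+r,q-r}(\bfone)$ is irreducible and $\wtK_2$-multiplicity free, each nonzero $\Gamma_{\cW}L(\mu)$ is forced to be $\cong\theta_{p+r,q-r}(\bfone)$ as a $(\fgg,\wtK_2)$-module, and summing over $\cW$ and $\mu$ gives~(ii). The main obstacle is the degree-by-degree Vogan-Zuckerman bookkeeping: identifying the unique pair $(\cW_0,\mu_0)$ realizing $\theta_{p+r,q-r}(\bfone)$ at degree $j_0$, ruling out smaller degrees, and --- since $G'$ is noncompact here, unlike in \Cref{thm:e1} --- first establishing that $\theta_{p,q}(\bfone)|_{\fhh,\wtM}$ is genuinely discretely decomposable with the summands $L(\mu)$ pinned down explicitly enough to feed into the cohomology computation.
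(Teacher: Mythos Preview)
Your approach mirrors the paper's: decompose $\theta_{p,q}(\bfone)|_{\fhh,\wtM}$ discretely, apply the $\Gamma_{\cW}$-decomposition of \Cref{sec:dec}, compute $\wtK_2$-types via Vogan--Zuckerman, and upgrade to $(\fgg,\wtK_2)$-module statements by \Cref{thm:scaler} with $G'_1=G'_2=G'$ and $\rho_1=\rho_2=\bfone$. Two points deserve correction.

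First, the inference ``$\theta_{p,q}(\bfone)$ is unitarizable, so its restriction to $(\fhh,\wtM)$ is a direct sum of irreducibles'' is not valid as stated: unitarity yields only a direct-integral decomposition. You flag this at the very end, but the earlier ``so'' is misleading. The paper does not invoke Li here; it obtains discrete decomposability from the argument in~\cite{Loke2006Howe}.

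Second, and more substantively, your claim that a \emph{single} pair $(\cW_0,\mu_0)$ satisfies $\Gamma_{\cW_0}L(\mu_0)\cong\theta_{p+r,q-r}(\bfone)$ as $\wtK_2$-modules cannot be right. For a fixed $\mu$, the set $\hat{\rK}_l$ in \Cref{sec:dec} consists of the finitely many $\wtK_2$-types sharing the infinitesimal and central character of $L(\mu)$, so $\Gamma_{\cW_0}L(\mu)$ is finite dimensional; but $\theta_{p+r,q-r}(\bfone)$ has infinitely many $\wtK_2$-types. The paper's assertion is that there is a unique $\wtM$-type $\cW_0\subset\bigwedge^*\fhh/\fmm$ with $\Gamma_{\cW_0}V_l\neq 0$ for some $l$, that $\cW_0$ first appears in degree $j_0$ with multiplicity one, and that
\[
\Gamma_{\cW_0}\theta_{p,q}(\bfone)\;=\;\bigoplus_{\mu}\Gamma_{\cW_0}L(\mu)\;\cong\;\theta_{p+r,q-r}(\bfone)
\]
as $\wtK_2$-modules: the full $\wtK_2$-spectrum of $\theta_{p+r,q-r}(\bfone)$ is assembled from \emph{all} the $L(\mu)$, each contributing a finite piece. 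The copies in degrees $j>j_0$ then come from the higher multiplicities of the same $\cW_0$ in $\bigwedge^j\fhh/\fmm$. This does not break your application of \Cref{thm:scaler}, which proceeds one $\wtK_2$-type (hence one $\mu$) at a time, but it does change the shape of the Vogan--Zuckerman bookkeeping you describe.
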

\begin{proof}[Sketch of the proof]  See
  \cite[Section~3.5.2]{MaT} for the details. 
  Here $\theta_{p,q}(\bfone)$ is again a direct sum of irreducible
  unitarizable $(\fhh,\wtM)$-modules by the argument
  in~\cite{Loke2006Howe}. We apply the decomposition in
  \Cref{sec:dec}. For \eqref{item:ex2.1}
  $\Gamma^j\theta_{p,q}(\bfone)=0$ as $\wtK_2$-module already; For
  \eqref{item:ex2.2} there is a unique $\wtM$-type $\cW_0 \subset
  \bigwedge^* \fhh/\fmm$ such that
  $\Gamma_{\cW_0} V_l$ is non-zero for some $l$. It first occurs in
  $\bigwedge^{j_0}\fhh/\fmm$ with multiplicity $1$ and occurs in
  $\bigwedge^j\fhh/\fmm$ with some multiplicities for $j>j_0$.  One
  calculates that
  \[
  \Gamma_{\cW_0} \theta_{p,q}(\bfone) = \theta_{p+r,q-r}(\bfone)
  \]
  as $\wtK_2$-module. Setting $G'_1 =G'_2= G'$ and applying
  \Cref{thm:scaler}, we finish the proof.
\end{proof}

\subsection{}
The above theorem generalize the results of type $D$ in
\cite{LokeMaTang2011} and the proof here is conceptually simpler.  By
the same argument in \cite{LokeMaTang2011}, we extend the theorem
to theta lifts of unitary highest weight modules:
\begin{cor}\label{Cor:ex2}
  Fix integers $p,q,r,s,n$ ($n_1,n_2$ for type~$A$) such that $p+q+s$
  is even in type~$A$ and type $D$. We retain notations in
  \Cref{tab:exB} and assume that $(G_{p,q+s},G')$ is in the stable
  range.  Let $\mu$ be a genuine $\wtG_{s,0}$-module (finite
  dimensional since $\wtG_{s,0}$ is compact).  Let $L(\mu)$ be the
  unitary highest weight $\wtG'$-module lifted from $\mu$.
  \begin{enumerate}[(i)]
  \item If $(G_{p+r, q+s-r},G')$ is not in the stable range,
    $\Gamma^j\theta^{p,q}(L(\mu)) = 0$ for all $j\in \bN$.
  \item If $(G_{p+r,q+s-r},G')$ is in the stable range,
    \[
    \Gamma^j\theta^{p,q}(L(\mu)) =
    \begin{cases}
      \theta^{p+r,q-r}(L(\mu)) & \text{when } j=j_0\\
      0 &\text{when } j< j_0
    \end{cases}
    \]
    where $j_0$ is defined in \Cref{tab:exB}.  If $j>j_0$,
    $\Gamma^j\theta^{p,q}(L(\mu))$ is a direct sum of several copies
    of $\theta^{p+r,q-r}(L(\mu))$.
  \end{enumerate}
\end{cor}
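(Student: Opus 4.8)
The plan is to imitate the proof of \Cref{thm:ex2}, with the trivial-character lift $\theta_{p,q}(\bfone)$ replaced throughout by $\theta^{p,q}(L(\mu))$, and to dispose of the two genuinely new points --- a description of $\theta^{p,q}(L(\mu))$ as a $(\fkk_2,\wtM)$-module, and the passage from a $\wtK_2$-module isomorphism to a $(\fgg,\wtK_2)$-module isomorphism --- by the see-saw reduction of \cite{LokeMaTang2011}. First I would use that $L(\mu)$ is, by hypothesis, the theta lift of a finite-dimensional module $\mu$ of the compact group $\wtG_{s,0}$. In each of types $A$, $C$, $D$ there is a commuting embedding $G_{s,0}\times G_{p,q}\hookrightarrow G_{p,q+s}$, hence a see-saw pair formed by the dual pairs $(G_{p,q+s},G')$ and $(G_{s,0}\times G_{p,q},\,G'\times G')$ with $G'$ embedded diagonally. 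Pushing the trivial character of $G'$ through \Cref{lem:seesaw}, as in the compact decomposition used for \Cref{thm:e1} (and invoking \cite{Loke2006Howe}), one obtains an isomorphism of $\wtG_{s,0}\times(\fgg_{p,q},\wtK_{p,q})$-modules
\[
  \theta_{p,q+s}(\bfone)\big|_{\wtG_{s,0}\times\wtG_{p,q}}\;\cong\;\bigoplus_{\nu}V_\nu^*\otimes\theta^{p,q}(L(\nu)),
\]
and in particular $\theta^{p,q}(L(\mu))$, restricted to $(\fhh,\wtM)=(\fkk_2,\wtM)$, is a direct sum of irreducible unitarizable $(\fkk_2,\wtM)$-modules --- the precise analogue of the starting point of the proof of \Cref{thm:ex2}.

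Next I would run the Zuckerman-functor computation on $\wtK_2$-types. Writing $\theta^{p,q}(L(\mu))=\bigoplus_l V_l$ and invoking the decomposition \eqref{eq:decW} of \Cref{sec:dec},
$\Gamma^j\theta^{p,q}(L(\mu))=\bigoplus_{\cW}\Gamma_{\cW}\theta^{p,q}(L(\mu))$,
one evaluates each $\Gamma_{\cW}V_l$ by Vogan--Zuckerman theory \cite{VoganZuckerman1984}, case by case in the three types. This carries the bulk of the (routine but lengthy) bookkeeping: one isolates the unique $\wtM$-type $\cW_0\subset\bigwedge^{j_0}\fhh/\fmm$ that contributes, with $j_0$ as in \Cref{tab:exB} --- multiplicity one in degree $j_0$, higher multiplicities for $j>j_0$ --- shows every other $\cW$ gives zero, and reads off $\Gamma_{\cW_0}\theta^{p,q}(L(\mu))\cong\theta^{p+r,q-r}(L(\mu))$ as $\wtK_2$-modules when $(G_{p+r,q+s-r},G')$ is in the stable range, while everything vanishes when it is not. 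Part (i) is then complete, since there the vanishing already holds at the level of $\wtK_2$-modules.

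It remains, in part (ii), to upgrade this $\wtK_2$-module isomorphism to a $(\fgg,\wtK_2)$-module isomorphism. For \Cref{thm:ex2} this was supplied by \Cref{thm:scaler}, but \Cref{thm:scaler} --- via \Cref{thm:ugkcor} and the Helgason surjectivity theorem \Cref{thm:scalerk} --- requires the lifted $\wtG'$-module to be one-dimensional, which $L(\mu)$ is not. Following \cite{LokeMaTang2011}, I would route around this: \Cref{thm:ex2}, applied to the enlarged pair with parameters $p$ and $q+s$, already computes $\Gamma^j\theta_{p,q+s}(\bfone)$ as a $(\fgg,\wtK_2)$-module, and one transports this statement down to $\theta^{p,q}(L(\mu))$ and $\theta^{p+r,q-r}(L(\mu))$, which are the $\wtG_{s,0}$-isotypic pieces of $\theta_{p,q+s}(\bfone)$ and $\theta_{p+r,q+s-r}(\bfone)$ by the displayed see-saw isomorphism and its $(p+r,q-r)$-analogue. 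Combined with the multiplicity count above this yields the stated formula: $\Gamma^j\theta^{p,q}(L(\mu))=0$ for $j<j_0$, $\cong\theta^{p+r,q-r}(L(\mu))$ for $j=j_0$, and a direct sum of several copies of $\theta^{p+r,q-r}(L(\mu))$ for $j>j_0$.

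The step I expect to be the main obstacle is exactly this transport. The $\wtG_{s,0}$-action is \emph{not} central in the enlarged $\wtG$ --- it fails to commute with $\fgg_{p,q+s}$, with $\fkk_2$, and with the subalgebra $\fhh$ that governs the enlarged $\Gamma^j$ --- so the $\wtG_{s,0}$-isotypic decomposition of $\theta_{p,q+s}(\bfone)$ is not a priori preserved by the enlarged Zuckerman functor, and the identity ``the enlarged $\Gamma^j$ restricts to the original $\Gamma^j$ on $\wtG_{s,0}$-isotypic pieces'' has to be extracted from the explicit bottom-layer and $\bigwedge^{\bullet}\fhh/\fmm$ description, using that $\wtG_{s,0}$ acts trivially on $\bigwedge^{\bullet}\fhh/\fmm$ and commutes with $(\fgg_{p,q},\wtK_{p,q})$. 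Should that compatibility resist a direct argument, the fallback is to re-run the $\cU(\fgg)^{\wtK_2}$-action analysis of \Cref{lem:ugk}--\Cref{thm:ugkcor} directly for the modules $\theta^{p,q}(L(\mu))$, exploiting that a unitary highest weight module lifted from a compact group has an annihilator in $\cU(\fgg')$ close enough to a maximal ideal for the Helgason-type surjectivity to go through.
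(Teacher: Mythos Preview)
Your proposal is correct and is exactly the approach the paper indicates: the paper gives no self-contained proof of \Cref{Cor:ex2} but simply invokes ``the same argument in \cite{LokeMaTang2011}'', and the see-saw reduction you describe --- realizing $\theta^{p,q}(L(\mu))$ as a $\wtG_{s,0}$-isotypic component of $\theta_{p,q+s}(\bfone)$ and then transporting the conclusion of \Cref{thm:ex2} (applied to the enlarged pair) through that identification --- is precisely that argument.

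One remark on the obstacle you flag. It is milder than your fallback suggests: $\wtG_{s,0}$ is compact and contained in $\wtM^{\mathrm{enl}}\subset\wtK_2^{\mathrm{enl}}$, so by functoriality the enlarged Zuckerman functor commutes with taking $\wtG_{s,0}$-isotypic components, and the resulting $(\fgg_{p,q},\wtK_2^{\mathrm{orig}})$-structure on the $\mu$-piece is inherited from the enlarged structure because $\fgg_{p,q}$ centralizes $\wtG_{s,0}$. (Note also that $j_0$ in \Cref{tab:exB} depends only on $n$ and $r$, so it agrees for the original and enlarged problems.) Your fallback --- attempting a Helgason-type surjectivity directly for the annihilator of $L(\mu)$ --- is unnecessary, and would in fact be the harder route, since that annihilator is generally far from a maximal ideal and \Cref{thm:scalerk} does not apply.
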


\subsection{}
In view of \Cref{thm:e1}, \Cref{thm:ex2} and \Cref{Cor:ex2}, we
speculate that theta correspondence and derived functors would be
compatible upto Langlands packets or Arthur packets.

\begin{bibdiv}
\begin{biblist}

\bib{Adams2007}{incollection}{
      author={Adams, Jeffrey},
       title={The theta-correspondence over $\mathbb{R}$},
        date={2007-11},
   booktitle={Harmonic analysis, group representations, automorphic forms and
  invariant theory: In honor of Roger E. Howe},
      editor={Li, Jian-Shu},
      editor={Tan, Eng-Chye},
      editor={Wallach, Nolan},
      editor={Zhu, Chen-Bo},
      series={Lecture Notes Series, Institute for Mathematical Sciences,
  National University of Singapore},
      volume={12},
   publisher={World Scientific Publishing Company},
}


\bib{Enright}{article}{
  author = {Enright, Thomas},
  title = {Unitary representations for two real forms of a semisimple
    Lie algebra: a theory of comparison},
  pages = {1 \ndash 29},
  book = {
    title={Lie group representations. I.},
    volume={1024},
    year={1983},
    series={Lecture Notes in Math.},
    publisher= {Springer-Verlag},
  }
}

\bib{Enright1983}{article}{
  author = {Enright, Thomas},
  author = {Howe, Roger}, 
  author = {Wallach, Nolan},
  title={A classification of unitary highest weight modules},
  book = {
    title={Representation Theory of Reductive Groups},
    volume={40},
    year={1983},
    series={Prog. Math.},
    editor={Trombi, P.C.},
    publisher={Birkh\"auser Boston}
  },
pages={97-143},
}

\bib{Enright1985}{article}{
      author={Enright, Thomas~J.},
      author={Parthasarathy, R.},
      author={Wallach, Nolan~R.},
      author={Wolf, J., Joseph~A.},
       title={Unitary derived functor modules with small spectrum},
        date={1985-03},
     journal={Acta Math.},
      volume={154},
      number={1},
       pages={105\ndash 136},
         url={http://dx.doi.org/10.1007/BF02392820},
}


\bib{Frajria1991}{article}{
      author={Frajria, Pierluigi~M\"oseneder},
       title={Derived functors of unitary highest weight modules at reduction
  points},
        date={1991},
        ISSN={00029947},
     journal={Trans. Amer. Math. Soc.},
      volume={327},
      number={2},
       pages={703\ndash 738},
         url={http://www.jstor.org/stable/2001820},
}

\bib{Helgason1964Fund}{article}{
      author={Helgason, S.},
       title={Fundamental solutions of invariant differential operators on
  symmetric spaces},
        date={1964/07/01},
     journal={Amer. J. Math.},
      volume={86},
      number={3},
       pages={565\ndash 601},
         url={http://www.jstor.org/stable/2373024},
}

\bib{Helgason1992}{article}{
      author={Helgason, Sigurdur},
       title={Some results on invariant differential operators on symmetric
  spaces},
        date={1992},
        ISSN={00029327},
     journal={Amer. J. Math.},
      volume={114},
      number={4},
       pages={789\ndash 811},
         url={http://www.jstor.org/stable/2374798},
}

\bib{Howe1989Rem}{article}{
      author={Howe, Roger},
       title={Remarks on classical invariant theory},
        date={1989},
        ISSN={00029947},
     journal={Trans. Amer. Math. Soc.},
      volume={313},
      number={2},
       pages={539\ndash 570},
         url={http://www.jstor.org/stable/2001418},
}

\bib{Howe1989Tran}{article}{
      author={Howe, Roger},
       title={Transcending classical invariant theory},
        date={1989},
        ISSN={08940347},
     journal={J. Amer. Math. Soc.},
      volume={2},
      number={3},
       pages={535\ndash 552},
         url={http://www.jstor.org/stable/1990942},
}




\bib{Kob}{article}{ 
  title={Discrete decomposability of the
    restriction of $A_q(\lambda)$ with respect to reductive subgroups
    and its applications},
  author={Kobayashi, Toshiyuki},
  year={1994},
  journal={Invent. Math.}, 
  volume={117}, 
  number={1},
  pages={181-205},
}


\bib{LeeNishiyama2008}{article}{
  author={Lee, S.~T.},
  author={Nishiyama, K.},
  author={Wachi, A.},
  title={Intersection of harmonics and capelli identities for symmetric pairs},
  date={2008},
  journal={J. Math. Soc. Japan},
  volume={60},
  number={4},
  pages={955--982},
}

\bib{Lepowsky76}{article}{
  title = {A Generalization of H. Weyl's ``Unitary Trick''},
  author = {Lepowsky, J.},
  journal = {Trans. Amer. Math. Soc.},
  volume = {216},
  pages = {229--236},
  year = {1976},
  publisher = {American Mathematical Society},
}

\bib{Lep}{article}{
  author = {Lepowsky, J.},
  author = {McCollum, G. W.},
  title = {On the Determination of Irreducible Modules by Restriction to a Subalgebra},
  journal = {Trans. Amer. Math. Soc.},
  year = {1973},
  volume = {176},
  pages = {45--57},
  publisher = {American Mathematical Society},
}

\bib{Li1989}{article}{
  author={Li, Jian-Shu},
  title={Singular unitary representations of classical groups},
  date={1989-06},
  journal={Invent. Math.},
  volume={97},
  number={2},
  pages={237\ndash 255},
  url={http://dx.doi.org/10.1007/BF01389041},
}


\bib{Loke2006Howe}{article}{
  author={Loke, Hung~Yean},
  author={Lee, Soo~Teck},
  title={Howe quotients of unitary characters and unitary lowest
    weight modules},
  date={2006},
  journal={Represent. Theory},
  volume={10},
  pages={21\ndash 47},
}

\bib{LokeMaTang2011}{article}{
  author={Loke, Hung~Yean},
  author={Ma, Jia-Jun},
  author={Tang, U-Liang},
  title={Transfer of {$K$}-types on local theta lifts of characters and unitary lowest weight modules},
  journal={to appear in Israel J. Math.},
  eprint={arXiv:1207.6454},
  date={2012},
}

\bib{MaT}{thesis}{
  author={Ma, Jia-Jun},
  title={Two topics on local theta correspondence},
  type = {Ph.D. Thesis},
  organization = {National University of Singapore},
  date={2013-02},
}

\bib{MVW1987}{book}{
  author={M{\oe}glin, C.},
  author={Vign{\'e}ras, M.~F.},
  author={Waldspurger, J.~L.},
  title={Correspondances de howe sur un corps p-adique},
  publisher={Springer-Verlag},
  date={1987},
  ISBN={0387186999},
}


\bib{Przebinda1996inf}{article}{
      author={Przebinda, T.},
       title={The duality correspondence of infinitesimal characters},
        date={1996},
     journal={Colloq. Math.},
      volume={70},
       pages={93\ndash 102},
}

\bib{Shimura1990}{article}{
      author={Shimura, Goro},
       title={Invariant differential operators on hermitian symmetric spaces},
        date={1990},
        ISSN={0003486X},
     journal={Ann. of Math. (2)},
      volume={132},
      number={2},
       pages={237\ndash 272},
         url={http://www.jstor.org/stable/1971523},
}

\bib{VoganZuckerman1984}{article}{
      author={Vogan, David~A.},
      author={Zuckerman, Gregg~J.},
       title={Unitary representations with nonzero cohomology.},
        date={1984},
     journal={Compos. Math.},
      volume={53},
       pages={51\ndash 90},
}

\bib{Wallach1994Trans}{article}{
      author={Wallach, N.~R.},
       title={Transfer of unitary representations between real forms},
        date={1994},
     journal={Contemp. Math.},
      volume={177},
       pages={181\ndash 216},
}

\bib{Wallach1988}{book}{
      author={Wallach, Nolan~R.},
       title={Real reductive groups {I}},
      series={Pure and applied mathematics},
   publisher={Academic Press, Inc., Boston},
        date={1988},
      volume={132},
         url={http://www.ams.org/mathscinet-getitem?mr=929683},
}

\bib{WallachZhu2004}{article}{
      author={Wallach, Nolan~R.},
      author={Zhu, Chen-Bo},
       title={Transfer of unitary representations},
        date={2004},
     journal={Asian J. Math.},
      volume={8},
      number={4},
       pages={861\ndash 880},
}

\bib{Zhu2003}{article}{
      author={Zhu, Chen-Bo},
       title={Representations with scalar $K$-types and applications},
        date={2003-12},
     journal={Israel J. Math.},
      volume={135},
      number={1},
       pages={111\ndash 124},
         url={http://dx.doi.org/10.1007/BF02776052},
}

\bib{ZhuHuang1997}{article}{
      author={Zhu, Chenbo},
      author={Huang, JingSong},
       title={On certain small representations of indefinite orthogonal
  groups},
        date={1997},
     journal={Represent. Theory},
      volume={1},
       pages={190\ndash 206},
}

\end{biblist}
\end{bibdiv}

\end{document}